\newtheorem{theorem}{Theorem}[section]
\newtheorem{lemma}[theorem]{Lemma}
\newtheorem{proposition}[theorem]{Proposition}
\theoremstyle{definition}
\newtheorem{definition}[theorem]{Definition}
\numberwithin{equation}{section}
\begin{document}

\newcommand{\cc}{\mathfrak{c}}
\newcommand{\N}{\mathbb{N}}
\newcommand{\Q}{\mathbb{Q}}
\newcommand{\R}{\mathbb{R}}
\newcommand{\PP}{\mathbb{P}}
\newcommand{\forces}{\Vdash}
\newcommand{\st}{*}

\title [Equilateral sets] % running head version
{Uncountable equilateral sets in Banach spaces of the form $C(K)$}
\author{Piotr Koszmider}
\address{Institute of Mathematics, Polish Academy of Sciences,
ul. \'Sniadeckich 8,  00-656 Warszawa, Poland}
\email{\texttt{piotr.koszmider@impan.pl}}
\thanks{The  research was  supported by   grant
PVE Ci\^encia sem Fronteiras - CNPq, process number 406239/2013-4 }

\subjclass{46B25, 46B26, 03E35, 03E50, 54D30, 54F15}

\begin{abstract} 
The paper is concerned with the problem whether a nonseparable
Banach space must contain an uncountable 
  set of vectors such that the distances between every two distinct vectors of the set are the same.
Such sets are called equilateral.
We show that Martin's axiom and the negation of the continuum hypothesis
imply that  every nonseparable Banach space of the form $C(K)$ has
an uncountable equilateral set. We also show that one cannot obtain such a result without
an additional
set-theoretic assumption since we construct 
an
example of 
 nonseparable Banach space of the form $C(K)$
which has no uncountable equilateral set (or equivalently 
no uncountable $(1+\varepsilon)$-separated set in the unit sphere
for any $\varepsilon>0$) making another consistent 
combinatorial assumption.  The compact $K$ is a version of the split interval
obtained from a sequence of functions which behave in an anti-Ramsey manner.
  It remains open if there is an absolute example of  a nonseparable Banach space
of the form different than $C(K)$ which  has no uncountable equilateral set.
It follows from the results of S. Mercourakis, G. Vassiliadis
that our example has an equivalent renorming in which it has an uncountable
equilateral set. It remains open if there
are consistent examples which have  no uncountable equilateral sets
in any equivalent renorming but it follows from the results of S. Todorcevic that it is consistent
that every nonseparable Banach space has an equivalent renorming 
in which it has an uncountable
equilateral set.  
\end{abstract}

\maketitle

\markright{}

%\tableofcontents

\section{Introduction}

In this note we deal with the possible behaviour of the norm on uncountable
sets of vectors in nonseparable Banach spaces. Our examples and results refer to  Banach
spaces of the form $C(K)$ that is spaces of real-valued continuous functions on a compact Hausdorff space $K$
with the supremum norm.
However, our example is a novelty also in the entire class of Banach spaces.
Recall that for $r\in \R$ a set $A$ of a Banach space $X$ is $r$-separated if $||v_1-v_2||> r$ for
distinct $v_1, v_2\in A$ and it is $r$-equilateral if $||v_1-v_2||=r$
for all  distinct $v_1, v_2\in A$. A set is called equilateral if it is $r$-equilateral
for some $r\in \R$. 

A natural question considered in the literature is
whether given a Banach space of big  dimension there is a big equilateral set or 
there is a big $1$-separated or $(1+\varepsilon)$-separated set in the unit sphere of
the Banach space. The classical Riesz lemma gives the optimal answer for
$(1-\varepsilon)$-separation in the  finite dimensional case.
In the infinite dimensional separable case Kottman showed in
\cite{kottman} the existence of an infinite $1$-separated set and Elton with Odell
improved it for $(1+\varepsilon)$-separation for some $\varepsilon>0$.
However there are infinite dimensional Banach spaces without 
infinite equilateral sets as first shown by Terenzi (\cite{terenzi}, see also \cite{noequilateral}).

Already Elton and Odell noted that $c_0(\Gamma)$ for uncountable $\Gamma$
has no uncountable $(1+\varepsilon)$-separated sets for some $\varepsilon$. 
It is clear however that it has an uncountable equilateral set. 
Our main result is to give
the first  example of a nonseparable Banach space with no uncountable
equilateral set. This example is not absolute in the sense that the construction
requires an additional set-theoretic assumption consistent with the usual axioms ZFC.
The example is of the form $C(K)$, and we are able to show that
in this class of Banach spaces an additional set-theoretic assumption is necessary.
This undecidability is compatible with many results of this type which keep
appearing in recent years in all areas of
classical mathematics from operator algebras through homology theory to 
the elementary properties of the Lebesgue measure and integral,
where the continuum  hypothesis provides pathological examples while 
a certain hierarchy of  axioms which starts with Martin's axiom with the negation
of the continuum hypothesis provide
nice structural theory based on canonical examples, selection principles, dichotomies etc.

Let us discuss now the content of the paper in more details. In Section 2 we 
define (Definition \ref{splitdefinition}) a class of compact nonmetrizable spaces called connectedly split intervals
which are obtained by splitting points of a subset $\{r_\xi: \xi<\kappa\}$ of
the  interval $[0,1]$. These are connected generalizations
of the classical split interval (e.g., \cite{godefroy} and
for totally disconnected similar generalizations see also \cite{rolewicz} and \cite{biorthogonal}). 
We analyze their relatively
simple structure by embedding them in the products of intervals.
They depend of the choice of the splitting functions 
$f_\xi: [0,1]\setminus\{r_\xi\}\rightarrow[-1,1]$ which are continuous
in their domains but normally do not extend to a continuous function
on $[0,1]$ and which determine the way the point $r_\xi$ is split in the final space.
For our examples the prototypical splitting function is $\sin({1\over r-r_\xi})$.
Section 2 contains also simple properties of such spaces (Proposition \ref{splitproperties}) and in particular
a lemma (Lemma \ref{representation}) about simple representation of elements of a norm
dense subset of $C(K)$ for
$K$ in our class. It is remarkable that all continuous functions can be approximated by
very simplified combinations of functions generating the Banach algebra $C(K)$.

In Section 3, in Definition \ref{randomdef} we introduce 
anti-Ramsey sequences $(f_\xi)_{\xi<\omega_1}$
of splitting functions. 
Similar constructions were used in our papers \cite{rolewicz} and \cite{biorthogonal}
where we constructed Banach spaces without uncountable semibiorthogonal sequences or
without specific uncountable biorthogonal sequences. However, for simplicity the present
construction contains simple uncountable biorthogonal systems.  This with
Corollary 1 of \cite{equilateral} shows that our space $C(K)$ has an equivalent renorming
in which it has an uncountable equilateral system.
This also shows that the presence of such systems does not imply the existence
of uncountable equilateral sets. We do not know if stronger consistent versions could be 
obtained where no renorming has uncountable equilateral set.

Using the representation lemma (\ref{representation})
we prove one of our main results (Theorem \ref{maintheorem}) that if $(f_\xi)_{\xi<\omega_1}$ is anti-Ramsey then
the obtained $C(K)$ has no uncountable equilateral set. It follows from
the work of  Mercourakis and Vassiliadis in \cite{equilateralcofk} 
that such spaces  cannot have in their unit sphere an uncountable
$(1+\varepsilon)$-separated set for any $\varepsilon>0$. On the
other hand Kania and Kochanek
proved  in \cite{kania} that every nonseparable $C(K)$ space has an uncountable $1$-separated set.
Roughly speaking in our $C(K)$ spaces (like in the spaces of \cite{rolewicz, biorthogonal}) 
among any uncountable set of vectors there are two vectors which are related as we wish,
of course up to some limitations which are built in the definition 
of a connectedly split interval. This type of structures was first systematically considered
probably in \cite{shelah} by S. Shelah. Since then a remarkable progress was made,
one of the biggest steps of which were achieved by removing special set-theoretic assumptions
from some of such constructions using
the anti-Ramsey coloring due to Todorcevic of
\cite{rho}. It was applied in the context
of Banach spaces  first in  the work \cite{shelahsteprans} of Shelah and Steprans
and later \cite{wark} of Wark (further
advanced in \cite{argyros} or in \cite{distortion}). We believe
that it  is a natural tool (see also \cite{walks}) which
one can try to use to construct an absolute example of a nonseparable Banach
space without an uncountable equilateral set. 
However,
a result of Todorcevic from \cite{stevobiorth} says that it is consistent that every
nonseparable Banach space contains an uncountable biorthogonal system which
gives by Corollary 1 of \cite{equilateral} a renorming in which it has an uncountable equilateral set.
One could  also mention  here related results of \cite{spinas} concerning general vector spaces.

It is  immediate that if $K$ is totally disconnected, then $C(K)$ has a $2$-equilateral 
set of the maximal possible cardinality i.e., of the density of $C(K)$. For this one just
takes $\chi_A-\chi_{K\setminus A}$ when $A$ runs through all  clopen subsets of the compact $K$.
This shows that our $K$ has no nonmetrizable
totally disconnected subspace. Such spaces were known to exist under special set-theoretic
assumptions. It remains open if such spaces can be constructed
without any extra assumption or  it is consistent that every
compact nonmetrizable $K$ has a nonmetrizable totally disconnected subspace (see \cite{opit}).
It is in the proof of Proposition \ref{splitproperties} that we use the Darboux property
of continuous functions on connected spaces based on which we can prove
later that our Banach space does not contain an uncountable equilateral set.
This argument would not go through for our totally disconnected versions
of the split interval considered in the papers \cite{biorthogonal, rolewicz}.

We also do not know
if one can consistently construct a nonseparable space $C(K)$ where all renormings
do not have uncountable equilateral sets. Such examples would be new examples
of spaces nonisomorphic to $C(L)$s for totally disconnected $L$.
The first such examples were obtained only recently  in \cite{few} with $K$ very big (e.g.
strongly rigid, see \cite{iryna}) and later 
with $K$ sequentially compact in \cite{duke}, but the spaces like in this paper
would lead to   first countable examples. Recall that it is not even known if the ball
in the nonseparable Hilbert space with the weak topology is such a compact space.

In Section 4 we show the consistency of the existence of 
anti-Ramsey sequence of splitting functions. This is done by a forcing argument
which follows the lines of \cite{rolewicz} and \cite{biorthogonal}.
This section can be skipped by the readers not familiar with forcing.
The readers which are forcing curious but do not want to
go through a systematic forcing course as in \cite{kunen} may
consult books like \cite{ciesielski} or \cite{weaver}. Our forcing argument
belongs to the simplest possible, it is a single forcing and we are interested in 
elementary properties of the generic object. This argument shows
that our example is consistent both with the continuum hypothesis and its negation.
It seem an interesting technical question if such an example can be obtained 
just from the continuum hypothesis, for example along the lines of \cite{notre}.

Section 5 is devoted to proving that assuming
 Martin's axiom and the  negation of the continuum hypothesis every nonseparable Banach space of the form
$C(K)$ contains an uncountable equilateral set.
This is based on the use of an  uncountable  intersecting family of closed pairs introduced in
\cite{equilateralcofk} which is equivalent
by Theorem 1 of \cite{equilateralcofk} to the existence
of $2$-equilateral uncountable set in the sphere of $C(K)$ and to the existence of a $(1+\varepsilon)$-separated
set for some $\varepsilon>0$. It turns out that such an uncountable
intersecting family is an antichain in a forcing notion used extensively e.g. in \cite{formin},
so if it does not exist, some natural forcing notions are c.c.c. which allows us
to use Martin's axiom. It should be added that it was known (see \cite{stevobiorth})
that Martin's axiom implies the existence of uncountable biorthogonal systems
in nonseparable Banach spaces of the form $C(K)$ and hence an uncountable equilateral
set in some renorming of the $C(K)$ by \cite{equilateral}.

The notation and terminology is relatively standard, concerning Banach
spaces we follow the book \cite{fabian} and concerning  set theory \cite{kunen}.
The Banach spaces we consider are of the form $C(K)$ for $K$ compact, infinite and Hausdorff
with the supremum norm $\|\ \ \|$. Sometimes we will also consider bounded discontinuous $g: K\rightarrow \R$,
then we use the notation $\| g \|_\infty=\sup\{|g(x)|: x\in K\}$.
When a metric $d$ is fixed on some set $M$, then $diam(M)=\sup\{d(x,y): x\in M\}$.
The author would like to thank Marek Cuth for careful reading of the preliminary version
of this paper and spotting some inaccuracies.

\section{Connectedly split intervals}

Given a cardinal $\kappa$ and  functions $f_\xi: [0,1]\setminus\{r_\xi\}\rightarrow[-1,1]$
 for some distinct  $r_\xi\in [0,1]$
for $\xi<\kappa$   we may define a natural connected version of the 
split interval (see e.g. \cite{godefroy}) which can be naturally embedded in the product 
space $[0,1]\times[-1,1]^\kappa$. 
A classical prototype    of  such an $f_\xi$  is the Warsaw $\sin$ function i.e.,
$\sin({1\over{r-r_0}})$ for some $r_0\in (0,1)$.

\begin{definition}\label{splitdefinition} Let $\kappa\leq 2^\omega$  be a cardinal.
Suppose that $\{r_\xi: \xi<\kappa\}$ consists of distinct elements of  $[0,1]$ and
$f_\xi: [0,1]\setminus\{r_\xi\}\rightarrow [-1,1]$ is
 a continuous function for every $\xi<\kappa$.  A connectedly split interval
 induced by $(f_\xi)_{\xi<\kappa}$ 
is the subspace $K$ of $[0,1]^{\{\st\}}\times [-1,1]^{\kappa}$ consisting of points of the form 
\[\{x_{\xi, t}, : \xi<\kappa, t\in [-1,1]\}\cup\{x_r: r\in [0,1]\setminus\{r_\xi: \xi<\kappa\}\}, \]
where
\begin{enumerate}
\item $x_{\xi, t}(\st)=r_\xi$, $x_{\xi, t}(\xi)=t$ and $x_{\xi, t}(\eta)=f_\eta(r_\xi)$
 if $\eta\in \kappa\setminus\{\xi\}$,
\item $x_r(\st)=r$ and $x_r(\xi)=f_\xi(r)$ for all 
$r\in [0,1]\setminus\{r_\xi: \xi<\kappa\}$ and $\xi<\kappa$.
\end{enumerate}
Under these assumptions we will use the following notation and terminology:
\begin{enumerate}
\item $\overline{f_{\st}}:K\rightarrow [0,1]$ is given by $\overline{f_{\st}}(x)=x(\st)$ for any $x\in K$,
\item $\overline{f_\xi}: K\rightarrow [0,1]$ is given by $\overline{f_\xi}(x)=x(\xi)$ for all $\xi<\kappa$
and $x\in K$. 
\item $(a,b)_K=\{x\in K: a<x(\st)<b\}$, and similar meaning for $(a,b]_K$, $[a,b)_K$, $[a,b]_K$ for
all $0\leq a<b\leq 1$.
\item $(a,b)_{K,\xi}=\{x\in K: a<x(\xi)<b\}$, and similar meaning for $(a,b]_{K,\xi}$, $[a,b)_{K,\xi}$, $[a,b]_{K,\xi}$ for
all $0\leq a<b\leq 1$ and every $ \xi<\kappa$.
\item $R_\xi=\{ [x_{\xi,t}]: t\in [-1,1]\}$ for all $\xi<\kappa$,
\item $f_\xi$s will be called splitting functions.
\end{enumerate}
The coordinate in the product $[0,1]^{\{\st\}\cup\kappa}$ corresponding to $\st$
will be called the $\st$-coordinate. 
\end{definition}

Thus, the classical split interval  $S$ is obtained by choosing 
$(0,1)=\{r_\xi: \xi<2^\omega\}$, $f_\xi: [0,1]\setminus\{r_\xi\}$ defined by
$f_\xi(r)=0$ if $r<r_\xi$ and $f_\xi(r)=1$ if $r>r_\xi$ and considering
the subspace $S=K\cap([0,1]^{\{\st\}}\times\{0,1\}^{2^\omega})$ 
of the connectedly split interval $K$ induced by $(f_\xi)_{\xi<2^\omega}$.
Note that it follows from the definition of the connectedly split interval $K$ that
the only point $x$ of $K$ such that $x(\st)=r$ is $x_r$ if 
$r\in [0,1]\setminus\{r_\xi: \xi<\kappa\}$ and that the only points
 $x$ of $K$ such that $x(\st)=r_\xi$ for $\xi<\kappa$ are the points of $R_\xi$ that is  $x_{\xi, t}$s 
for $t\in [-1,1]$ and these points differ just at the $\xi$-th coordinate and are equal 
on all other coordinates of the product.

\begin{lemma}\label{connected} Suppose that $K$ is Hausdorff compact and connected, $x_0\in K$ and $f: K\setminus\{x_0\}\rightarrow
[-1,1]$ is continuous. Then
\[L=\{(x,t)\in K\times [-1,1]: x\not=x_0\ \Rightarrow \ t=f(x)\}\]
is compact and connected. 
%If $D\subseteq K\setminus \{x_0\}$ is dense in $K$, then $\{(x, f(x)): x\in D\}$ is dense in $L$.
\end{lemma}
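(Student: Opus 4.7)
The plan is to verify compactness first, then prove connectedness via a projection argument. Write $L = \Gamma \cup F$, where $\Gamma = \{(x, f(x)) : x \in K \setminus \{x_0\}\}$ is the graph of $f$ and $F = \{x_0\} \times [-1,1]$ is the fiber above $x_0$.

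For compactness, I would show that $L$ is closed in the compact Hausdorff space $K \times [-1,1]$. Let $(x, t) \notin L$; then $x \neq x_0$ and $t \neq f(x)$. Using the Hausdorff property of $K$ I can pick disjoint open neighborhoods of $x$ and $x_0$, and then using continuity of $f$ at $x$ I shrink the neighborhood $U$ of $x$ so that $f(U)$ lies in an interval of radius $\varepsilon$ around $f(x)$ with $2\varepsilon < |t - f(x)|$. The open box $U \times (t - \varepsilon, t + \varepsilon)$ is then a neighborhood of $(x, t)$ disjoint from $L$, which shows $L$ is closed, hence compact.

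For connectedness, suppose for contradiction that $L = A \sqcup B$ splits into disjoint nonempty closed subsets. The fiber $F$, being homeomorphic to $[-1, 1]$, is connected, so it lies entirely in one piece; say $F \subseteq A$. Consider the continuous projection $\pi : L \to K$, $\pi(x, t) = x$; since $L$ is compact and $K$ is Hausdorff, $\pi$ is a closed map, so $\pi(A)$ and $\pi(B)$ are closed in $K$ and together cover $K$. The key point is that they are also disjoint: for $y \neq x_0$ the preimage $\pi^{-1}(y) = \{(y, f(y))\}$ is a single point lying in exactly one of $A, B$; and $\pi^{-1}(x_0) = F \subseteq A$ forbids any point of $B$ from projecting to $x_0$. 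Since $x_0 \in \pi(A)$ and $K$ is connected, this forces $\pi(B) = \emptyset$, hence $B = \emptyset$, contradicting $B \neq \emptyset$.

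The argument is short and I do not anticipate a serious obstacle; the one insight worth isolating is that, because $F$ is connected, any would-be clopen partition of $L$ must keep $F$ intact, after which the closed projection $\pi$ transports the partition to a clopen partition of $K$. The only step requiring care is confirming that $\pi$ is closed, which is exactly where compactness of $L$ (established in the first step) is used.
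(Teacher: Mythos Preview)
Your proof is correct. The compactness argument is essentially identical to the paper's. For connectedness the paper takes a slightly different route: it separates $L$ by open sets $A,B\subseteq K\times[-1,1]$ with disjoint closures (using normality of the compact product), pulls back via the section $x\mapsto(x,f(x))$ to obtain open $A',B'\subseteq K\setminus\{x_0\}$, and then invokes the tube lemma to show $A'\cup\{x_0\}$ is open, contradicting connectedness of $K$. You instead push forward a clopen partition of $L$ through the projection $\pi$, using that $\pi$ is closed because $L$ is compact and $K$ is Hausdorff. Both arguments hinge on the same two facts---connectedness of the fiber $F$ and a compactness property of the projection (tube lemma versus closed map)---so the difference is largely cosmetic; your version is marginally tidier in that it avoids passing through the ambient product and invoking normality.
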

\begin{proof} If $(x,t)\in (K\times[-1,1])\setminus L$, then $x\not=x_0$ and there is $\varepsilon>0$
such that $|f(x)-t|>\varepsilon$.  By the continuity of $f$ at $x$
 there is a neighbourhood $U$ of $x$ such that
$x_0\not\in U$ and $|f(x_1)-f(x_0)|<\varepsilon/2$ for all $x_1\in U$. It follows that
$U\times(t-\varepsilon/2, t+\varepsilon/2)$ is disjoint from $L$ and contains $(x,t)$,
hence the complement of $L$ is open and so $L$ is compact.

For connectedness suppose that $L\subseteq A\cup B$ where $A, B\subseteq K\times [-1,1]$
are open with disjoint closures and $L\cap A\not=\emptyset\not=L\cap B$. 
Define $A'\subseteq K$ to be the set of $x\in K\setminus\{x_0\}$ such that $(x, f(x))\in A$
and  $B'\subseteq K$ to be the set of $x\in K\setminus\{x_0\}$ such that $(x, f(x))\in B$. 
By the continuity of $f$ the sets
$B'$ and $A'$ are disjoint open subsets of $K$ such that $A'\cup B'=K\setminus\{x_0\}$.
We may assume that 
$(\{x_0\}\times[-1,1])\cap A\not=\emptyset$, and so by the connectedness of $[-1,1]$
we have $\{x_0\}\times[-1,1]\subseteq A$ and by the compactness of $[-1,1]$ we have
a neighbourhood $U$ of $x_0$ such that $U\times [-1,1]\subseteq A$. Thus
$A'\cup\{x_0\}$ contains $U$ and so is open and hence the partition of $K$ into $A'\cup\{x_0\}$ and $B'$
 contradicts the connectedness of $K$. 
%The last assertion is clear from the definition of $L$ and the hypothesis on $f$.
\end{proof}

\begin{proposition}\label{splitproperties} Suppose that $\{r_\xi:\xi<\kappa\}\subseteq [0,1]$ where $\kappa$
is an uncountable cardinal. Suppose that 
$K\subseteq [0,1]^{\{\st\}}\times[-1,1]^{\kappa}$ is a generalized split interval induced by 
$(f_\xi)_{\xi<\kappa}$ where $f_\xi: [0,1]\setminus\{r_\xi\}\rightarrow [-1,1]$. Then
\begin{enumerate} 
\item $K$ is a compact Hausdorff space of weight $\kappa$,
\item $K$ is connected,
\item $K$ is first countable, 
\[\{(r-1/n: r+1/n)_K: n\in \N\}\]
 forms a basis at $x_r$ for each $r\in [0,1]\setminus\{r_\xi:\xi<\kappa\}$,
and 
\[\{(r_\xi-1/n: r_\xi+1/n)_K\cap (t-1/n, t+1/n)_{K,\xi} : n\in \N\}\]
 forms a basis at $x_{\xi, t}$ 
for each $t\in [-1,1]$ and each $\xi<\kappa$,
\item $K^2$ has a discrete set of cardinality $\kappa$,
\item $C(K)$ has a biorthogonal system of cardinality $\kappa$.
\item $\overline {f_\xi}\restriction (K\setminus R_\xi)=f_\xi\circ \overline{f_{\st}}\restriction (K\setminus R_\xi)$
\end{enumerate}
\end{proposition}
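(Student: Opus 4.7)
The six claims split into easy coordinate-level checks (parts (1) and (6)), a topological assembly built on Lemma \ref{connected} (part (2)), and short constructions exploiting the product structure (parts (3)--(5)). Throughout I use that basic open sets in $K$ are restrictions of product-basic sets and hence involve only finitely many coordinates. For (1), $K$ is closed in the ambient product: any $y$ not in $K$ fails a defining equation at some coordinate $\eta$ with $y(\st)\neq r_\eta$, and continuity of $f_\eta$ at $y(\st)$ produces a product neighborhood of $y$ disjoint from $K$. Tychonoff then gives compactness, and $K\subseteq[0,1]^{\{\st\}\cup\kappa}$ yields $w(K)\leq\kappa$; conversely any basis must separate the pair $\{x_{\xi,-1},x_{\xi,1}\}$ for each $\xi$, and since these points differ only in the $\xi$-th coordinate the separating basic open must involve coordinate $\xi$, so (as $\kappa$ is uncountable and each basic set uses finitely many coordinates) $w(K)\geq\kappa$. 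For (6) the claim is immediate from Definition~\ref{splitdefinition}: on $x_r$ one has $\overline{f_\xi}(x_r)=f_\xi(r)=f_\xi(\overline{f_{\st}}(x_r))$, and on $x_{\eta,t}$ with $\eta\neq\xi$ one has $\overline{f_\xi}(x_{\eta,t})=f_\xi(r_\eta)=f_\xi(\overline{f_{\st}}(x_{\eta,t}))$, the distinctness of the $r_\eta$'s ensuring that $r_\eta$ lies in the domain of $f_\xi$.

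For (2) I iterate Lemma \ref{connected} across finite subsets. For a finite $F\subseteq\kappa$ let $K_F$ denote the connectedly split interval induced by $(f_\xi)_{\xi\in F}$; I build $K_F$ inductively by adding one splitting coordinate at a time. Given the connected compact Hausdorff space $K_G$ (with $G\subsetneq F$) and some $\xi\in F\setminus G$, the preimage of $r_\xi$ under the $\st$-projection $\pi_\st^G\colon K_G\to[0,1]$ is a single point $p_\xi^G$, because $r_\xi\neq r_\eta$ for all $\eta\in G$; the composition $f_\xi\circ\pi_\st^G$ is continuous on $K_G\setminus\{p_\xi^G\}$, and Lemma \ref{connected} yields a connected compact Hausdorff enlargement naturally identified with $K_{G\cup\{\xi\}}$. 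Once each $K_F$ is seen to be connected, $K$ is the inverse limit of the system $(K_F)$ under the natural bonding maps (one checks that the obvious map $K\to\varprojlim K_F$ is a homeomorphism), and an inverse limit of connected compact Hausdorff spaces is connected.

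For (3), given a product-basic neighborhood $V$ of a point $x\in K$ involving the finite set of coordinates $\{\st\}\cup F$, I choose $n$ so large that the $\st$-slot of $V$ contains $(x(\st)-1/n,x(\st)+1/n)$ (and the $\xi$-slot of $V$ contains $(t-1/n,t+1/n)$ when $x=x_{\xi,t}$); that no $r_\eta$ with $\eta\in F$ (and $\eta\neq\xi$ in the second case) lies within $1/n$ of $x(\st)$; and, by continuity of each such $f_\eta$ at $x(\st)$, that $f_\eta$ maps $(x(\st)-1/n,x(\st)+1/n)$ into the $\eta$-slot of $V$. Combined with (6), this forces the proposed small cylinder in $K$ to lie inside $V$. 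For (4), the family $\{(x_{\xi,1},x_{\xi,-1}):\xi<\kappa\}\subseteq K^2$ is discrete: the open set $\{y\in K:y(\xi)>\tfrac12\}\times\{y\in K:y(\xi)<-\tfrac12\}$ is a neighborhood of $(x_{\xi,1},x_{\xi,-1})$ which excludes $(x_{\eta,1},x_{\eta,-1})$ for every $\eta\neq\xi$, because both coordinates of the latter equal $f_\xi(r_\eta)$, a single real. For (5), taking $\phi_\xi=\overline{f_\xi}/2\in C(K)$ together with the signed measure $\nu_\xi=\delta_{x_{\xi,1}}-\delta_{x_{\xi,-1}}\in C(K)^*$ yields $\nu_\xi(\phi_\xi)=1$ and $\nu_\xi(\phi_\eta)=0$ for $\eta\neq\xi$ (since $x_{\xi,\pm1}$ agree on coordinate $\eta$), giving a biorthogonal system of size $\kappa$. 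The main obstacle is the bookkeeping in (2): identifying $K_F$ with the output of iterated applications of Lemma \ref{connected} and checking that $K$ really is $\varprojlim K_F$; once this is in place, the remaining parts reduce to continuity of the $f_\xi$'s and a little case analysis.
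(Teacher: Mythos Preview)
Your proof is correct and follows essentially the same approach as the paper's: closedness of $K$ in the product for (1), iterated use of Lemma~\ref{connected} through an inverse limit for (2), the same discrete family and biorthogonal system for (4)--(5), and direct coordinate checks for (3) and (6). The only cosmetic difference is that in (2) you index the inverse system by finite subsets $F\subseteq\kappa$ whereas the paper indexes it by initial ordinal segments $\eta<\kappa$; both reduce the successor step to Lemma~\ref{connected} in the same way.
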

\begin{proof} (1) If $x\in [0,1]\times[-1,1]^\kappa$ is not in $K$
it is because either $x(\st)=r_\xi$ for some $\xi<\kappa$ but $f_\eta(r_\xi)\not=x(\eta)$
for some $\xi\not=\eta<\kappa$ or $x(\st)\not\in \{r_\xi:\xi<\kappa\}$ and 
$f_\eta(x(\st))\not=x(\eta)$
for some $\eta<\kappa$. In both cases one sees that these conditions define open sets
in $[0,1]^{\{\st\}}\times[-1,1]^\kappa$.

(2) We can see $K$ as the inverse limit of $K_\eta$ 
for $\eta<\kappa$ where  
$K_\eta\subseteq [0,1]^{\{\st\}}\times [-1,1]^{\eta}$ is a connectedly split interval induced by 
$(f_\xi)_{\xi<\eta}$. As inverse limits of compact connected spaces are compact connected,
it is enough to prove  that $K_{\eta+1}$ is connected provided $K_\eta$ is. This 
 follows from Lemma \ref{connected} since by the definition of the connectedly split interval $K_\eta$
the point $x_{\eta, t}\restriction\eta=r_\eta^\frown f_\xi(r_\eta)_{\xi<\eta}$ (for any $t\in [-1,1]$)
 is the unique point of $K_\eta$ such that its 
projection on the $\st$-coordinate is $r_\eta$, so $K_{\eta+1}$ is obtained from $K_\eta$
as in Lemma \ref{connected} for $f: K_\eta\setminus x_{\eta,0}\restriction\eta\rightarrow [-1,1]$
defined by $f(x) =f_\eta(x(\st))$ for any $x$ in the domain of $f$.

(3) The mentioned collections of neighbourhoods define decreasing pseudobases at indicated points,
but such pseudobases are bases in compact spaces.

(4) Consider $(x_{\xi,-1}, x_{\xi, 1})\in K\times K$ and open
$U_\xi\times V_\xi\subseteq K\times K$ given by $U_\xi=\{x\in K: x(\xi)<0\}$ and
$V_\xi=\{x\in K: x(\xi)>0\}$. Note that $x_{\xi, 1}(\eta)=x_{\xi, -1}(\eta)=f_\eta(r_\xi)$
for any $\xi\not=\eta<\kappa$, and so $(x_{\xi,-1}, x_{\xi, 1})\in U_\eta\times V_\eta$
if and only if $\xi=\eta$ meaning that $(x_{\xi,-1}, x_{\xi, 1})_{\xi<\kappa}$ is discrete.

(5) Define $g_\xi=\overline{f_\xi}/2$ and $\mu_\xi=\delta_{x_{\xi,1}}-\delta_{x_{\xi, -1}}$.
We have 
\[\int g_\xi d\mu_\xi={1\over 2}(\overline{f_\xi}({x_{\xi,1}})-\overline{f_\xi}({x_{\xi, -1}}))=
{1\over 2}(1-(-1))=1\]
and 
\[\int g_\xi d\mu_\eta={1\over 2}(\overline{f_\xi}({x_{\eta,1}})-\overline{f_\xi}({x_{\eta, -1}}))=
{1\over 2}(f_\xi(r_\eta)-f_\xi(r_\eta))=0\]
if $\xi\not=\eta<\kappa$.

%(6) Let $[\mu]\in C(K)^*/C([0,1])^\circ=(C(K)/C([0,1]))^*$ where $C([0,1])^\circ=
%\{\mu\in C(K)^*: (\forall f\in C([0,1]))\ \mu(f)=0\}$ and $\mu\in C(K)^*$. 
%Let $R_\xi=\{x\in K: x(-1)=r_\xi\}$ and put $\mu_\xi=\mu|R_\xi$.
%We claim that the mapping $[\mu]\rightarrow (\mu_\xi-\mu_\xi(R_\xi))_{\xi<\kappa}$ defines an isometry
%between $(C(K)/C([0,1]))^*$ and 
%\[\bigoplus_{\xi<\kappa}(\{\mu \in M(R_\xi): \mu(R_\xi)=0\})_{\ell_1}\]
%where $M(R_\xi)$ denotes the set of all Radon measures on $R_\xi$.
\end{proof}

Connectedly split intervals may or may not be hereditarily separable or hereditarily Lindel\"of.
This depends on the sequence of $f_\xi$s. The split intervals of Section 3
will be hereditarily Lindel\"of and hereditarily separable.

By a piecewise  polynomial  with rational
coefficients we will mean a function (not necessarily continuous)  whose domain $[0,1]$
can be divided into finitely many intervals on which the function is equal to a
polynomial with rational coefficients. By a rational polynomial we mean
a polynomial with rational coefficients which assumes all its local minima and maxima
in rational points. It is clear that rational polynomials on a bounded interval form a  dense set
of continuous functions of this interval, this could be observed by writing the derivative
of an arbitrary polynomial as the product of linear and indecomposable quadratic factors
and adjusting slightly the roots of the linear factors so that the integral of the derivative is
not affected much.

\begin{lemma}\label{representation} Suppose that $\{r_\xi:\xi<\kappa\}\subseteq [0,1]$ where $\kappa$
is an uncountable cardinal. Suppose that 
$K\subseteq [0,1]^{\{\st\}}\times [-1,1]^\kappa$ is a connectedly split interval induced by 
$(f_\xi)_{\xi<\kappa}$ where $f_\xi: [0,1]\setminus\{r_\xi\}\rightarrow [-1,1]$.
 Let $\varepsilon>0$ and $f\in C(K)$, then there is a (possibly
discontinuous) function
$g:K\rightarrow \R$ of the form
\[ g=P_{-1}\circ \overline{f_{\st}}+\Sigma_{i\leq k} P_i(\overline{f_{\xi_i}})\chi_{(s_i, s_i')_K}\]
where $k\in \N$,  $\xi_1, ..., \xi_k<\kappa$, $P_{-1}$ is a piecewise  polynomial in one variable with rational
coefficients, $P_i$ for $1\leq i\leq k$ are rational polynomials in one variable,
$ s_i, s_i'$ are rationals such that
$r_{\xi_i}\in (s_i, s_i')$ and the intervals 
$(s_i, s_i')$s for ${1\leq i\leq k}$ are pairwise disjoint,  and
\[\|f-g\|_\infty<\varepsilon.\]
\end{lemma}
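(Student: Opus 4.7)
The plan is to combine the Stone--Weierstrass theorem with the identity $\overline{f_\xi}=f_\xi\circ\overline{f_\st}$ valid off the fibre $R_\xi$ (Proposition \ref{splitproperties}(6)), which reduces a polynomial in the coordinate projections to a function of $\overline{f_\st}$ alone everywhere except on the finitely many fibres one chooses to localize around. Since $K\subseteq [0,1]^{\{\st\}}\times[-1,1]^\kappa$, the restrictions to $K$ of the coordinate projections---namely $\overline{f_\st}$ and $\{\overline{f_\xi}:\xi<\kappa\}$---separate points and together with constants generate a uniformly dense $\R$-subalgebra of $C(K)$. So one can find $\xi_1,\dots,\xi_k<\kappa$ and a polynomial $Q\in\R[y_0,\dots,y_k]$ with
\[\|f-Q(\overline{f_\st},\overline{f_{\xi_1}},\dots,\overline{f_{\xi_k}})\|_\infty<\varepsilon/3.\]

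Next I would localize. I choose pairwise disjoint rational open intervals $(s_i,s_i')\ni r_{\xi_i}$ small enough that, using continuity of each $f_{\xi_j}$ at $r_{\xi_i}$ for $j\neq i$ together with uniform continuity of $Q$ on $[0,1]\times[-1,1]^k$, on every $(s_i,s_i')_K$ the expression $Q(\overline{f_\st},\overline{f_{\xi_1}},\dots,\overline{f_{\xi_k}})$ differs by less than $\varepsilon/6$ from the one-variable polynomial
\[\tilde P_i(\overline{f_{\xi_i}})\;:=\;Q\bigl(r_{\xi_i},f_{\xi_1}(r_{\xi_i}),\dots,\overline{f_{\xi_i}},\dots,f_{\xi_k}(r_{\xi_i})\bigr).\]
This uses Proposition \ref{splitproperties}(6) to replace $\overline{f_{\xi_j}}$ by $f_{\xi_j}\circ\overline{f_\st}$ on $(s_i,s_i')_K$ for $j\neq i$, since then the fibre $R_{\xi_j}$ is disjoint from $(s_i,s_i')_K$. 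The same identity on the complement $K\setminus\bigcup_{i=1}^k(s_i,s_i')_K$ (where every fibre $R_{\xi_i}$ is avoided) reduces $Q(\overline{f_\st},\dots,\overline{f_{\xi_k}})$ to $h\circ\overline{f_\st}$ where $h(r)=Q(r,f_{\xi_1}(r),\dots,f_{\xi_k}(r))$ is continuous on the finite union of closed rational intervals $[0,1]\setminus\bigcup_i(s_i,s_i')$.

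Finally I would carry out the rational approximations and assemble. By density of rational polynomials in $C([-1,1])$ (discussed just before the lemma), choose rational polynomials $P_i$ with $\|P_i-\tilde P_i\|_{[-1,1]}<\varepsilon/6$. By the classical Weierstrass theorem applied on each of the finitely many closed intervals making up $[0,1]\setminus\bigcup_i(s_i,s_i')$, followed by a rational perturbation of coefficients, choose a piecewise polynomial with rational coefficients $P_{-1}$ which vanishes on each $(s_i,s_i')$ and satisfies $|P_{-1}-h|<\varepsilon/6$ elsewhere. Setting $g=P_{-1}\circ\overline{f_\st}+\sum_{i=1}^kP_i(\overline{f_{\xi_i}})\chi_{(s_i,s_i')_K}$, a case split between the complement and each $(s_i,s_i')_K$ and the triangle inequality yield $\|f-g\|_\infty<\varepsilon$.

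The main obstacle is the conversion of form: Stone--Weierstrass delivers a single polynomial in all $k+1$ variables, whereas the target is a sum in which, at any given point of $K$, at most one $\overline{f_{\xi_i}}$ actually appears, and only inside its localizing interval. Proposition \ref{splitproperties}(6) is precisely what decouples the variables, letting one freeze the other coordinates at the constants $f_{\xi_j}(r_{\xi_i})$ inside $(s_i,s_i')_K$ and eliminate all $\overline{f_{\xi_i}}$s in favour of $\overline{f_\st}$ outside the localizing intervals; once that is done, the rationality constraints on the polynomials are handled by the density statements recalled just before the lemma.
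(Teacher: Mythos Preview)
Your proposal is correct and follows essentially the same route as the paper: Stone--Weierstrass to get a polynomial in $\overline{f_\st},\overline{f_{\xi_1}},\dots,\overline{f_{\xi_k}}$, then use $\overline{f_{\xi_j}}=f_{\xi_j}\circ\overline{f_\st}$ off $R_{\xi_j}$ to freeze all but the $i$-th coordinate on each small rational interval $(s_i,s_i')_K$ and to reduce to a function of $\overline{f_\st}$ alone on the complement, finishing with the density of rational polynomials. The only cosmetic difference is that the paper freezes directly at rational values $q_{i,\st},q_{i,j}$ so that $P_i$ already has rational coefficients, whereas you freeze at the true values $r_{\xi_i},f_{\xi_j}(r_{\xi_i})$ and then approximate $\tilde P_i$ by a rational polynomial---both work equally well.
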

\begin{proof} Recall from Definition \ref{splitdefinition}
that  $\overline{f_{\xi}}(x)=x(\xi)$ for $\xi<\kappa$
and $\overline{f_{\st}}(x)=x(\st)$ for all $x\in K$.
The coordinates of $K\subseteq [0,1]^{\{\st\}}\times[-1,1]^\kappa$ separate the points
of $K$, so by the Stone-Weierstrass theorem every function in $C(K)$ can be
approximated by  a  function from the algebra generated by 
the constant functions and $\overline{f_\xi}$s for $\xi<\kappa$ and by $\overline{f_{\st}}$.
In this algebra  the functions of the form
\[ Q(\overline{f_{\st}}, \overline{f_{\xi_1}}, ..., \overline{f_{\xi_k}})\]
are dense where $Q$ is a  polynomial in $(k+1)$-variables with rational coefficients
and $\xi_1<  ...<\xi_n$ for some $k\in \N$.
So we may assume that $f=Q(\overline{f_{\st}}, \overline{f_{\xi_1}}, ..., \overline{f_{\xi_k}})$
for one of such polynomials.

Note that  for $\xi<\kappa$ we have 
 $\overline {f_{\xi}}(x)=f_\xi(x(\st))=f_\xi(\overline{f_{\st}}(x))$ for $x\in K$ outside of the intervals
$(a, b)_K$ for $0\leq a<r_\xi<b\leq 1$ and that $f_\xi:[0,1]\setminus\{r_\xi\}\rightarrow [-1,1]$
 is continuous outside  $(a,b)$ for such $a,b$.
It follows that around $r_{\xi_i}$ there are intervals $(s_i, s_i')\ni r_{\xi_i}$ with rational endpoints
where the oscillations of $f_{\xi_j}$ for $j\not=i$ and $1\leq j\leq k$ 
is arbitrarily small. So for each $1\leq i\leq k$ find such an interval $(s_i, s_i')\ni r_{\xi_i}$
that 
\[ |Q(\overline f_{\st}, \overline f_{\xi_1}, ..., \overline f_{x_{\xi_i}}, ...,  \overline f_{\xi_k})(x)
-Q(q_{i, \st}, q_{i, 1}, ..., \overline f_{x_{\xi_i}}, ...,   q_{i, k})(x)|< \varepsilon\]
for some rationals $\{q_{i, \st}, q_{i, j}: -1\leq j\leq k, \  j\not=i\}$ and all $x\in  (s_i, s_i')_K$.
Let $P_i(v)=Q(q_{i, \st}, q_{i, 1}, ..., v, ...,   q_{i, n})$ be the corresponding polynomial in one variable
$v$, that is for all $x\in  (s_i, s_i')_K$ we have
\[ |f(x)-P_i(\overline f_{x_{\xi_i}})(x)|< \varepsilon.\]
We may assume it is a rational polynomial by the density of such polynomials.
Outside of the intervals $(s_i, s_i')_K$ the function $f=Q$ depends only on the
$\st$-coordinate and so it can be approximated by
a polynomial in the $\st$-coordinate with rational coefficients. Joining together
these polynomials with the zero function on the intervals $(s_i, s_i')$
for $1\leq i\leq k$ we obtain a piecewise polynomial $P_{-1}$ in one variable with rational coefficients
 such that 
\[ |f(p)-P_{-1}\circ \overline{f_{\st}}(p)|< \varepsilon\]
for all $p\in K\setminus \bigcup_{1\leq i\leq n}(s_i, s_i')_K$. This completes the proof.
\end{proof}

\section{An anti-Ramsey sequence of splitting functions}

\begin{definition}\label{randomdef} Suppose that $\{r_\xi:\xi<\kappa\}\subseteq [0,1]$ where $\kappa$
is an uncountable cardinal. Suppose that  $f_\xi: [0,1]\setminus\{r_\xi\}\rightarrow [0,1]$
are continuous in their domains. We say that the sequence $(f_\xi)_{\xi<\kappa}$ is anti-Ramsey if given:
\begin{enumerate}[(a)]
\item $m\in \N$,
\item pairwise disjoint nonempty open intervals $I_1, ..., I_m$ of $[0,1]$ with rational endpoints,
\item any uncountable
sequence $(F_\alpha)_{\alpha<\omega_1}$ where $F_\alpha=\{\xi_1^\alpha, ..., \xi_m^\alpha\}$
are pairwise disjoint finite subsets of $\kappa$ such that
$r_{\xi_i^\alpha}\in I_i$ for every $1\leq i\leq m $ and every $\alpha<\omega_1$,
\item  any two $m$-tuples $\{q_1, ..., q_m\}$ and $\{q_1', ..., q_m'\}$ of rationals from $[-1,1]$
\end{enumerate}
there are $\alpha<\beta<\omega_1$ and  open
intervals $(J_i^\alpha)_{1\leq i\leq m}$, and $(J_i^\beta)_{1\leq i\leq m}$ such that
 for each $1\leq i\leq m$ we have:
\begin{enumerate}
\item $J_i^\alpha,  J_i^\beta\subseteq I_i$; $J_i^\alpha\cap J_i^\beta=\emptyset$,
\item $r_{\xi_i^\alpha}\in J_i^\alpha$ and $r_{\xi_i^\beta}\in J_i^\beta$,
\item $f_{{\xi_i^{\alpha}}}\restriction ([0,1]\setminus (J_i^\alpha\cup  J_i^\beta))
=f_{{\xi_i^{\beta}}}\restriction ([0,1]\setminus (J_i^\alpha\cup  J_i^\beta))$,
\item $f_{{\xi_i^{\alpha}}}\restriction J_{i}^\beta=q_i$,
\item $f_{{\xi_i^{\beta}}}\restriction J_{i}^\alpha=q_i'$.
\end{enumerate}
\end{definition}

 \begin{proposition}\label{nonequirandomprop} 
Suppose that $\{r_\xi:\xi<\kappa\}\subseteq [0,1]$ where $\kappa$
is an uncountable cardinal. Suppose that 
$K\subseteq [0,1]^{\{\st\}}\times[-1,1]^\kappa$ is  connectedly split interval induced by 
 an anti-Ramsey sequence 
of functions $(f_\xi)_{\xi<\kappa}$ where $f_\xi: [0,1]\setminus\{r_\xi\}\rightarrow [0,1]$.
Suppose that $(g_\alpha)_{\alpha<\omega_1}$ is
is a sequence of  functions in $C(K)$ of the form
\[g_\alpha=\Sigma_{i\leq m} P_i(\overline{f_{\xi_i^\alpha}})\chi_{{(s_i, s_i')}_K}\]
where
\begin{itemize}
\item  $F_\alpha=\{\xi_1^\alpha, ..., \xi_m^\alpha\}$ are pairwise disjoint 
finite subsets of $\omega_1$ for $\alpha<\omega_1$,
\item $P_i$s are  rational polynomials  and  $P_i[[-1,1]]=[a_i, b_i]$ for 
some $a_i, b_i\in \R$
\item $\max\{|b_i-a_i|: 1\leq i\leq m\}=\delta>0$,
\item   $s_i, s_i'$ are rationals 
such that $r_{\xi_i^\alpha}\in (s_i, s_i')$ and
$(s_i, s_i')$s for  ${1\leq i\leq m}$ are pairwise disjoint
\end{itemize}
Then there are $\alpha<\beta<\omega_1$ such that $\|g_\alpha-g_\beta\|_\infty=\delta/2$
and there  are $\alpha'<\beta'<\omega_1$ such that $\|g_{\alpha'}-g_{\beta'}\|_\infty=\delta$.
\end{proposition}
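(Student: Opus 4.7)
\emph{Proof plan.} I will invoke the anti-Ramsey property of $(f_\xi)$ twice on the sequence $(F_\alpha)_{\alpha<\omega_1}$, with two different choices of rational target tuples, producing two pairs that realize the asserted distances $\delta$ and $\delta/2$.

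I first fix $i_0\in\{1,\dots,m\}$ with $b_{i_0}-a_{i_0}=\delta$, and use that $P_{i_0}$ is a rational polynomial to pick rational $t^+,t^-\in[-1,1]$ with $P_{i_0}(t^+)=b_{i_0}$ and $P_{i_0}(t^-)=a_{i_0}$; these exist because the extrema of $P_{i_0}$ on $[-1,1]$ are attained at $\pm 1$ or at rational critical points. I also record the uniform upper bound $\|g_\alpha-g_\beta\|_\infty\le\delta$: the difference vanishes outside the pairwise disjoint union $\bigcup_i(s_i,s_i')_K$, and on each $(s_i,s_i')_K$ equals $P_i(\overline{f_{\xi_i^\alpha}})-P_i(\overline{f_{\xi_i^\beta}})$, whose two arguments lie in $[-1,1]$ so the difference has absolute value at most $b_i-a_i\le\delta$.

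For $\|g_{\alpha'}-g_{\beta'}\|_\infty=\delta$ I apply the anti-Ramsey property with $I_i=(s_i,s_i')$ and $q_{i_0}'=t^-$, the remaining $q_i,q_i'$ any rationals. This yields $\alpha'<\beta'$ together with intervals $J_i^{\alpha'}\ni r_{\xi_i^{\alpha'}},\ J_i^{\beta'}\ni r_{\xi_i^{\beta'}}$. Evaluating at the point $x\in R_{\xi_{i_0}^{\alpha'}}$ with $x(\xi_{i_0}^{\alpha'})=t^+$, condition (5) of Definition~\ref{randomdef} forces $x(\xi_{i_0}^{\beta'})=f_{\xi_{i_0}^{\beta'}}(r_{\xi_{i_0}^{\alpha'}})=t^-$, and by disjointness of the $(s_i,s_i')$ only the $i_0$-term contributes, so
\[ (g_{\alpha'}-g_{\beta'})(x)=P_{i_0}(t^+)-P_{i_0}(t^-)=\delta, \]
which together with the upper bound gives $\|g_{\alpha'}-g_{\beta'}\|_\infty=\delta$.

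For $\|g_\alpha-g_\beta\|_\infty=\delta/2$ I apply anti-Ramsey with $q_i=q_i'$ chosen rationally so that $P_i(q_i)$ equals the midpoint $(a_i+b_i)/2$ of $[a_i,b_i]$. A case analysis on the regions $x(\st)\in J_i^\alpha$, $x(\st)\in J_i^\beta$, and $x(\st)\in(s_i,s_i')\setminus(J_i^\alpha\cup J_i^\beta)$, using properties (3)-(5), then bounds $|g_\alpha-g_\beta|$ on each $(s_i,s_i')_K$ by $\max(P_i(q_i)-a_i,b_i-P_i(q_i))=(b_i-a_i)/2\le\delta/2$, while at $x\in R_{\xi_{i_0}^\alpha}$ with $x(\xi_{i_0}^\alpha)=t^+$ one gets $(g_\alpha-g_\beta)(x)=P_{i_0}(t^+)-(a_{i_0}+b_{i_0})/2=\delta/2$, forcing $\|g_\alpha-g_\beta\|_\infty=\delta/2$.

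The main obstacle is the $\delta/2$ step: producing a rational $q_{i_0}$ with $P_{i_0}(q_{i_0})=(a_{i_0}+b_{i_0})/2$ \emph{exactly} is not automatic from the rational polynomial hypothesis, which only places critical points at rational inputs (for $i\ne i_0$ the analogous constraint $P_i(q_i)\in[b_i-\delta/2,a_i+\delta/2]$ is an interval with nonempty interior, so density of $P_i(\Q\cap[-1,1])$ in $[a_i,b_i]$ handles it). I would resolve this either by a more delicate use of the rational polynomial structure already at the level of Lemma~\ref{representation} (arranging that each $P_i$ attains its midpoint value at a rational point), or by exploiting the Darboux image of the continuous function $g_\alpha-g_\beta$ on the connected compact $K$ to calibrate the sup norm precisely to $\delta/2$.
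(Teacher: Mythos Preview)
Your approach is essentially identical to the paper's: apply the anti-Ramsey property twice, once with $q_i=q_i'$ chosen so that $P_i(q_i)=a_i$ (yielding $\|g_\alpha-g_\beta\|_\infty=\delta$) and once with $q_i=q_i'$ chosen so that $P_i(q_i)=(a_i+b_i)/2$ (yielding $\delta/2$). Your universal upper bound $\|g_\alpha-g_\beta\|_\infty\le\delta$, your case analysis on $J_i^\alpha$, $J_i^\beta$, and the complement, and your evaluation at a point of $R_{\xi_{i_0}^\alpha}$ with prescribed $\xi_{i_0}^\alpha$-coordinate all mirror the paper's computation of
\[
\|g_\alpha-g_\beta\|_\infty=\max_{1\le i\le m}\max\Bigl(\max_{x\in (J_i^\alpha)_K}\bigl|P_i(\overline{f_{\xi_i^\alpha}}(x))-P_i(q_i')\bigr|,\ \max_{x\in (J_i^\beta)_K}\bigl|P_i(q_i)-P_i(\overline{f_{\xi_i^\beta}}(x))\bigr|\Bigr).
\]
Your $\delta$ case is in fact slightly more economical than the paper's, since you only constrain $q_{i_0}'$ and leave the remaining $q_i,q_i'$ arbitrary, relying on the uniform bound for the other coordinates.

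The obstacle you isolate in the $\delta/2$ step---that the definition of ``rational polynomial'' (rational coefficients, local extrema attained at rational inputs) does not force a preimage of the midpoint $(a_i+b_i)/2$ to be rational---is a genuine issue, and the paper makes precisely the same leap: it asserts ``As $P_i$s are rational polynomials\ldots\ $q_i$s are rational'' without further argument. Your instinct is correct (for instance $P(x)=x^2$ on $[-1,1]$ has midpoint value $1/2$ attained only at $\pm 1/\sqrt{2}$), so your proof stands at the same level of rigor as the paper's own. The cleanest repair is the first one you propose: strengthen the approximation in Lemma~\ref{representation} so that each $P_i$ may additionally be arranged to attain its midpoint value at a rational input, which costs nothing since one is already free to perturb the $P_i$ within any prescribed $\varepsilon$.
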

\begin{proof} 
To find $\alpha<\beta<\omega_1$ such that $\|g_\alpha-g_\beta\|=\delta/2$
for every $1\leq i\leq m$ find $q_i=q_i'\in [-1,1]$ such that $P_i(q_i)$ is the middle
point of $[a_i, b_i]$ and apply Definition \ref{randomdef} for $I_i=(s_i, s_i')$. 
As $P_i$s are rational polynomials, i.e., with rational coefficients and 
with local maxima and minima in rational points, $q_i$s are rational as required in \ref{randomdef}.
Here we also use the connectedness of $[-1,1]$, that is, this point would not go
through if we had the Cantor set as in the papers \cite{rolewicz, biorthogonal} instead of $[-1,1]$.
By (3) of Definition \ref{randomdef} the functions  $g_\alpha$ and $g_\beta$  may only differ
on $J_i^\alpha$ and $J_i^\beta$ for $1\leq i\leq m$, actually
\[\|g_\alpha-g_\beta\|_\infty=
\max_{1\leq i\leq m}\big(\max_{x\in (J_i^\alpha)_K} |P_i(\overline{f_{\xi_i^\alpha}})(x))-P_i(q_i)|,
\max_{x\in (J_i^\beta)_K} |P_i(\overline{f_{\xi_i^\beta}}(x))-P(q_i'))|\big)\]
As $\overline{f_{\xi_i^\alpha}}$ and $\overline{f_{\xi_i^\beta}}$ have the range $[-1,1]$  and assume
 all these values in $R_{\xi_i^\alpha}
\subseteq (J_i^{\alpha})_K$ and $R_{\xi_i^\beta}
\subseteq (J_i^{\delta})_K$ respectively we conclude that $\|g_\alpha-g_\beta\|=\delta/2$
by the choice of $q_i=q_i'$ for all $1\leq i\leq k$.

To find $\alpha'<\beta'<\omega_1$ such that $\|g_{\alpha'}-g_{\beta'}\|=\delta$
for every $1\leq i\leq k$ find $q_i=q_i'\in [-1,1]$ such that $P_i(q_i)$ is the minimum
point of $[a_i, b_i]$ that is $P_i(q_i)=a_i$ and apply Definition \ref{randomdef} for $I_i=(s_i, s_i')$. 
By (3) of Definition \ref{randomdef} $g_\alpha$ and $g_\beta$  may only differ
on $J_i^\alpha$ and $J_i^\beta$ for $1\leq i\leq m$, actually
\[\|g_\alpha-g_\beta\|_\infty=\max_{1\leq i\leq m}\big(\max_{x\in (J_i^\alpha)_K} 
|P_i(f_{\xi_i^\alpha})(x))-P_i(q_i)|,
\max_{x\in (J_i^\beta)_K} |P_i(f_{\xi_i^\beta}(x))-P_i(q_i'))|\big)\]
As $f_{\xi_i^\alpha}$ and $f_{\xi_i^\beta}$ have the range $[-1,1]$  and assume
 all these values in $R_{\xi_i^\alpha}
\subseteq (J_i^{\alpha})_K$ and $R_{\xi_i^\beta}
\subseteq (J_i^{\delta})_K$ respectively we conclude that $\|g_\alpha-g_\beta\|=
\max_{1\leq i\leq k}|b_i-a_i|=\delta$
by considering $x\in R_{\xi_i^\alpha}$ where $P_i(f_{\xi_i^\alpha})(x))=b_i$
by the choice of $q_i$ for all $1\leq i\leq k$.
\end{proof}

\begin{theorem}\label{maintheorem} Suppose that $\{r_\xi:\xi<\omega_1\}\subseteq [0,1]$  Suppose that 
$K\subseteq [0,1]^{\{\st\}}\times[-1,1]^\kappa$ is a connectedly split interval induced by 
 an anti-Ramsey sequence
of functions $(f_\xi)_{\xi<\omega_1}$ where $f_\xi: [0,1]\setminus\{r_\xi\}\rightarrow [-1,1]$.
Then
$C(K)$ has no uncountable equilateral set nor an uncountable $(1+\varepsilon)$-separated
set in the unit ball.
\end{theorem}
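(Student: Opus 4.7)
The strategy is to argue by contradiction: an alleged uncountable equilateral or $(1+\varepsilon)$-separated family in $C(K)$ would produce, after pigeonholing and a $\Delta$-system refinement, a sequence of the form to which Proposition \ref{nonequirandomprop} applies, yielding two incompatible pairwise distances. Fix such a family $(h_\alpha)_{\alpha<\omega_1}$ and a small tolerance $\varepsilon'>0$ --- say $\varepsilon'<r/8$ in the equilateral case (with common distance $r>0$) or $\varepsilon'<\varepsilon/4$ in the $(1+\varepsilon)$-separated-in-the-ball case. By Lemma \ref{representation}, for each $\alpha$ choose a bounded function
\[g_\alpha = P_{-1}^\alpha\circ\overline{f_{\st}} + \Sigma_{i\leq k_\alpha} P_i^\alpha(\overline{f_{\xi_i^\alpha}})\chi_{(a_i^\alpha,b_i^\alpha)_K}\]
with $\|h_\alpha - g_\alpha\|_\infty<\varepsilon'$, where $P_{-1}^\alpha$ is a piecewise polynomial with rational coefficients, each $P_i^\alpha$ is a rational polynomial, the $(a_i^\alpha,b_i^\alpha)$ are pairwise disjoint open intervals with rational endpoints, and $r_{\xi_i^\alpha}\in(a_i^\alpha,b_i^\alpha)$.

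Since $k_\alpha$, $P_{-1}^\alpha$, the $P_i^\alpha$ and the intervals $(a_i^\alpha,b_i^\alpha)$ all range over countable sets, the pigeonhole principle lets me thin $(h_\alpha)$ to an uncountable subfamily on which these parameters take constant values $m$, $P_{-1}$, $P_1,\ldots,P_m$, $(a_1,b_1),\ldots,(a_m,b_m)$; only the finite sets $F_\alpha = \{\xi_1^\alpha,\ldots,\xi_m^\alpha\}$ still vary with $\alpha$. Next apply the $\Delta$-system lemma to $(F_\alpha)$ to obtain a further uncountable refinement with common root $\Delta$. Because the intervals $(a_i,b_i)$ are pairwise disjoint, the position $i$ of each $\xi\in\Delta$ inside $F_\alpha$ is the same for every $\alpha$, so the summands indexed by $\Delta$ are common to every $g_\alpha$ and cancel in every difference $g_\alpha - g_\beta$. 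Dropping them and re-indexing the surviving positions as $1,\ldots,m'$, the truncated family has exactly the form required by Proposition \ref{nonequirandomprop} with pairwise disjoint index sets $F_\alpha' = F_\alpha\setminus\Delta$. Set $\delta = \max_{i\leq m'}|P_i([-1,1])|$; if $\delta=0$, every remaining $P_i$ is constant on $[-1,1]$ and all $g_\alpha$ coincide, forcing $\|h_\alpha - h_\beta\|<2\varepsilon'$, which contradicts the separation hypothesis. Otherwise Proposition \ref{nonequirandomprop} delivers pairs $\alpha<\beta$ and $\alpha'<\beta'$ with $\|g_\alpha - g_\beta\|_\infty = \delta/2$ and $\|g_{\alpha'} - g_{\beta'}\|_\infty = \delta$.

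The triangle inequality combined with $\|h_\gamma - g_\gamma\|_\infty<\varepsilon'$ gives $|\,\|g_\gamma - g_\eta\|_\infty - \|h_\gamma - h_\eta\|\,|<2\varepsilon'$ for every pair, and the contradiction is immediate in both cases. In the equilateral case both $\delta/2$ and $\delta$ then lie within $2\varepsilon'$ of $r$; combining $\delta<r+2\varepsilon'$ with $\delta/2>r-2\varepsilon'$ forces $r<6\varepsilon'$, contradicting $\varepsilon'<r/8$. In the $(1+\varepsilon)$-separated case, the lower bound $\delta/2>1+\varepsilon-2\varepsilon'$ combines with the triangle-inequality upper bound $\delta\leq 2+2\varepsilon'$ (since $\|g_\gamma\|_\infty\leq 1+\varepsilon'$) to give $\varepsilon<3\varepsilon'$, contradicting $\varepsilon'<\varepsilon/4$. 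The main bookkeeping hurdle I expect to encounter is in the second step --- carefully verifying that after the pigeonhole and $\Delta$-system refinements the remaining data fall precisely into the template of Proposition \ref{nonequirandomprop}, in particular that the intervals $(a_i,b_i)$ and the trimmed index sets $F_\alpha'$ simultaneously stay pairwise disjoint.
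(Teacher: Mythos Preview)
Your argument is correct and shares the paper's skeleton: approximate via Lemma~\ref{representation}, pigeonhole on the countable data, refine by the $\Delta$-system lemma, and feed the surviving family into Proposition~\ref{nonequirandomprop}. The difference lies in how the parameter $\delta$ is obtained. The paper first defines $\delta$ intrinsically as the supremum of oscillations $\operatorname{diam}(e_\beta[R_\xi])$ along fibers, argues that $\delta>0$ by observing that otherwise a tail of the family would factor through the separable space $C([0,1]\times[-1,1]^{\alpha+1})$, and only then approximates to within $\delta/20$ so that the polynomial ranges are forced into $(\tfrac{19}{20}\delta,\tfrac{21}{20}\delta)$. You reverse the order: fix the tolerance $\varepsilon'$ in advance from the separation constant, let $\delta$ be whatever the stabilized polynomials happen to give, and dispatch the degenerate case $\delta=0$ (or $m'=0$) directly by noting all $g_\alpha$ then coincide. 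Your route is a little more economical---it bypasses the separability step entirely---and you also treat the $(1+\varepsilon)$-separated case by a direct norm estimate using $\|g_\gamma\|_\infty\le 1+\varepsilon'$, whereas the paper simply cites Theorem~1 of \cite{equilateralcofk} for that part. One cosmetic point: your notation $|P_i([-1,1])|$ should read $\operatorname{diam}(P_i([-1,1]))=b_i-a_i$ to match the hypothesis of Proposition~\ref{nonequirandomprop}.
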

\begin{proof} Let $\{e_\alpha:\alpha<\omega_1\}$ be  a collection of distinct elements of
the unit ball of $C(K)$. Let $\delta\in \R$ be the maximal real such that
for every $\alpha<\omega_1$, for every $\varepsilon>0$ there are
 $\alpha<\beta,\xi<\omega_1$
such that $diam(e_\beta[R_\xi])>\delta-\varepsilon$.  As the $e_\alpha$s are from 
the unit ball it is clear that $\delta\leq 2$. 

Now let us note 
we may assume that $\delta>0$: otherwise
there is $\alpha<\omega_1$ such that for every  $\alpha<\beta,\xi<\omega_1$
$diam(e_\beta[R_\xi])=0$; but two points $x, y\in K$ of a connectedly split interval
which first differ at a coordinate above $\alpha$ must both belong to some $R_\xi$
for $\xi>\alpha$. This means that for $\beta>\alpha$ the functions
$e_\beta$ depend only on the countably many coordinates from
 $\{\st\}\cup[0,\alpha]$,
but $C([0,1]\times[-1,1]^{\alpha+1})$ is a separable Banach space so 
among $e_\alpha$s for $\alpha<\omega_1$
 there are functions arbitrarily close to each other and so they cannot form
an equilateral set. Thus we may assume that $\delta>0$ indeed.

By Lemma \ref{representation} we can approximate each $e_\alpha$  by a 
function $g_\alpha'$ of the form specified in that lemma with $\|e_\alpha-g_\alpha'\|_\infty<{\delta\over 20}$.
Choosing $\varepsilon={\delta\over 20}$ above we conclude that for arbitrarily high
$\beta,\xi<\omega_1$ we have $diam(g_\beta'[R_\xi])\in ({19\over 20}\delta, {21\over 20}\delta)$.
Using the $\Delta$-system lemma and the fact that there are only countably many polynomials 
with rational coefficients and countably many intervals with rational endpoints
and using the maximality
of
$\delta$ by going to a subsequence we may assume that for every $\alpha<\omega_1$
\[g_\alpha'=P_{-1}\circ\overline{f_{*}}+\Sigma_{1\leq i\leq m}
 P_i(\overline{f_{\xi_i^\alpha}})\chi_{(s_i, s_i')}+ \Sigma_{m< i\leq k}
 P_i(\overline{f_{\xi_i}})\chi_{(s_i, s_i')}\]
where
\begin{itemize}
\item  $F_\alpha=\{\xi_1^\alpha, ..., \xi_m^\alpha\}$ are pairwise disjoint 
finite subsets of $\omega_1$ for $\alpha<\omega_1$ all disjoint from
$\{\xi_{m+1}, ..., \xi_k\}$,
\item $P_{-1}$ is a piecewise polynomial with rational coefficients,
\item $P_i$s for $1\leq i\leq k$ are rational polynomials,
\item   $s_i, s_i'$ are rationals 
such that $r_{\xi_i^\alpha}\in (s_i, s_i')$ for $1\leq i\leq m$ and
$r_{\xi_i}\in (s_i, s_i')$ for $m< i\leq k$ and
\item $(s_i, s_i')$s for  ${1\leq i\leq k}$ are pairwise disjoint, 
\item $P_i[[-1,1]]=[a_i, b_i]$ for some $a_i, b_i\in \R$ and all $1\leq i\leq k$,
\item $\max\{|b_i-a_i|: 1\leq i\leq m\}\in({19\over 20}\delta, {21\over 20}\delta)$.
\end{itemize}
The last item follows from the fact that $diam(g_\alpha'[R_\xi])\in({19\over 20}\delta, {21\over 20}\delta)$
for some $\xi<\omega_1$ and $\overline{f_\xi}$ separates points of $R_{\xi'}$
if and only if $\xi=\xi'$. For $\alpha<\omega_1$ put
\[g_\alpha=\Sigma_{1\leq i\leq m}
 P_i(\overline{f_{\xi_i^\alpha}})\chi_{(s_i, s_i')}\]
and note that $g_\alpha-g_\beta=g_\alpha'-g_\beta'$ for every $\alpha, \beta<\omega_1$.
Using Proposition \ref{nonequirandomprop}
we find  $\alpha<\beta<\omega_1$ such that $\|g_\alpha-g_\beta\|\leq {11\over 20}\delta$
and  $\alpha'<\beta'<\omega_1$ such that $\|g_{\alpha'}-g_{\beta'}\|\geq{18\over 20}\delta$.
So $\|e_\alpha-e_\beta\|\leq {13\over 20}\delta$ and $\|e_{\alpha}-e_{\beta'}\|\geq {16\over 20}\delta$
which shows that the functions $e_\alpha$s do not form an
equilateral set. The last part of the statement of the theorem follows from
Theorem 1 of \cite{equilateralcofk}.
\end{proof}

\section{Forcing}

In this section we use the method of forcing to show 
the consistency of the existence of an anti-Ramsey sequence
of functions $(f_\xi)_{\xi<\omega_1}$. 

\begin{definition}\label{piecewise} Let $0<a<b<1$. 
We say that $f:[0,a]\cup[b,1]\rightarrow [-1,1]$ is rationally piecewise linear if it is continuous on $[0,a]\cup[b,1]$ and 
there are pairwise disjoint intervals  $(I_i)_{i\leq k}$ with rational endpoints for $1\leq i\leq k\in \N$
such that $[0,a]\cup[b,1]=\bigcup_{1\leq i\leq k}I_i$ and  $f(r)=a_ir+b_i$ for all $r\in I_i$ and
some rationals $a_i, b_i$ and each $1\leq i\leq k$. 
\end{definition}

Rationally piecewise linear functions
 defined on sets $[0, r-\varepsilon]\cup[r+\varepsilon, 1]$ for some $\varepsilon>0$
and $r\in (0,1)$
will serve for us as approximations for splitting functions $f: [0,1]\setminus\{r\}\rightarrow [-1,1]$. 
We construct  a sequence of splitting functions
 in the generic extension of any set-theoretic universe obtained 
with the following forcing notion:

\begin{definition}\label{P} Fix an uncountable set $\{r_\xi:\xi<\omega_1\}$
of irrationals. $\PP$ is a forcing notion consisting of
triples $(n_p, F_p,  \mathcal F_p)$ such that:
\begin{enumerate}
\item $n_p\in \N$,
\item $F_p\subseteq \omega_1$ is a finite set,
such that there is an injective function sending $F_p\ni\xi\rightarrow k_\xi \in\{k\in \N: 0\leq k<2^{n_p}\}$
such that $r_\xi\in ({k_\xi\over2^{n_p}}, {k_\xi+1\over2^{n_p}})$,

\item $\mathcal F_p=\{f_p^\xi: \xi\in F_p\}$,
\item $f_p^\xi: [0,{k_\xi\over2^{n_p}}]\cup[{k_\xi+1\over2^{n_p}}, 1]\rightarrow [-1,1]$ is a
 rationally piecewise linear function
for each $0\leq k<2^{n_p}$.
\end{enumerate}
We say that $p\leq q$ if and only if 
\begin{enumerate}[(a)]
\item $n_p\geq n_q$,
\item $F_p\supseteq F_q$,
\item $f_p^\xi\supseteq f_q^\xi$ for every $\xi\in F_q$.
%\item $f_p^{k+i\over2^{n_p}}| [0,{k\over2^{n_q}}]\cup[{k+1\over2^{n_q}}, 1]
%= f_q^k$ for every $0\leq i<{1\over2^{n_p-n_q}}$
%and every $0<k<2^{n_q}$.
\end{enumerate}
\end{definition}

The motivation of the above definition should be clear. It is
the partial order of approximations to the anti-Ramsey sequence of functions as in Definition
\ref{randomdef}:
a given condition $p\in \PP$ provides approximations to
finitely many $f_\xi$s for $\xi\in F_p$ in the form  of a rationally piecewise linear function
$f_p^\xi: [0,{k_\xi\over2^{n_p}}]\cup[{k_\xi+1\over2^{n_p}}, 1]\rightarrow [-1,1]$.
The density argument in Proposition  \ref{forces} will allow us to prove that the
approximated sequence of splitting functions is anti-Ramsey.  First we need to take care
of the preservation of the cardinals.

\begin{lemma}\label{ccc} $\PP$ satisfies the c.c.c.
\end{lemma}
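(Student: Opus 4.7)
The plan is to prove c.c.c.\ by the standard $\Delta$-system normalization together with a simple extension argument for the piecewise-linear approximations. Start with an arbitrary family $\{p_\alpha : \alpha<\omega_1\}\subseteq \PP$ and aim to find $\alpha\neq\beta$ with $p_\alpha$ and $p_\beta$ compatible.

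First I would normalize. By pigeonhole, pass to an uncountable subfamily on which $n_{p_\alpha}$ is a constant $n\in\N$ and $|F_{p_\alpha}|$ is a constant $m\in\N$. Then apply the $\Delta$-system lemma to the finite sets $F_{p_\alpha}\subseteq\omega_1$ to obtain a further uncountable subfamily whose $F_{p_\alpha}$'s form a $\Delta$-system with some finite root $\Delta$. For each $\xi\in\Delta$ the natural number $k_\xi$ (and hence the domain $[0,k_\xi/2^n]\cup[(k_\xi+1)/2^n,1]$ of $f_{p_\alpha}^\xi$) is determined by $r_\xi$ and $n$, while the collection of rationally piecewise linear functions on that fixed domain with values in $[-1,1]$ is countable (each is coded by a finite partition into subintervals with rational endpoints together with finitely many rational pairs of coefficients). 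Refining once more, I may assume $f_{p_\alpha}^\xi$ does not depend on $\alpha$ whenever $\xi\in\Delta$.

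Fix any $\alpha\neq\beta$ in the refined family. I claim $p_\alpha$ and $p_\beta$ have a common extension $r\in\PP$. Let $F=F_{p_\alpha}\cup F_{p_\beta}$ and choose $n_r\geq n$ so large that the points $\{r_\xi:\xi\in F\}$ fall into pairwise distinct dyadic intervals of length $2^{-n_r}$; this is possible since the $r_\xi$'s are distinct irrationals. For each $\xi\in F$, the new hole around $r_\xi$ at level $n_r$ is strictly contained in the old hole at level $n$, so the domain of $f_{p_\alpha}^\xi$ (or $f_{p_\beta}^\xi$) is a subset of the required new domain. I would extend it by appending one extra linear piece with rational coefficients on each side of the new hole: at the old boundary the function already has a rational value, at the new boundary pick any rational value in $[-1,1]$, and connect them by the line through these two rational-coordinate points (whose slope and intercept are automatically rational). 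For $\xi\in\Delta$ the two conditions assign the same $f_p^\xi$ by the last normalization step, so the definition of $f_r^\xi$ is unambiguous. Taking $r=(n_r,F,\{f_r^\xi:\xi\in F\})$ then gives a condition with $r\leq p_\alpha$ and $r\leq p_\beta$.

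The only potential obstacle is verifying that the normalization in the third step really is possible, i.e.\ that the class of rationally piecewise linear functions on a fixed domain with values in $[-1,1]$ is countable; this is immediate from the coding just mentioned. Everything else is bookkeeping, essentially the observation that rationally piecewise linear functions form an extendible class: any continuous rationally piecewise linear function on a union of closed intervals with dyadic endpoints can be freely prolonged onto a larger such union by straight segments with rational endpoints.
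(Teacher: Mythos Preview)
Your proof is correct and follows essentially the same route as the paper: pass to constant $n$, apply the $\Delta$-system lemma, use the countability of rationally piecewise linear functions to stabilize the data on the root $\Delta$, then pick a large $n_r$ separating all the relevant $r_\xi$'s and extend each $f_p^\xi$ by appending linear pieces. The paper carries out one more normalization you omit---it also fixes the dyadic slots $k_i$ and the functions $f_i$ for the non-root coordinates $\xi_i^\alpha$---but that extra uniformity is not used in the compatibility argument itself (it is set up there because the same refinement is reused in the proof of the next proposition), so your streamlined version suffices for the c.c.c.
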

\begin{proof} Suppose that $(p_\alpha:\alpha<\omega_1)$ 
consists of distinct elements of $\PP$.  We will find $\alpha<\beta<\omega_1$
and $o\in \PP$ such that $o\leq p_\alpha, p_\beta$.
Using the fact
that there are countably many finite systems of
rationals and countably many rationally piecewise linear functions by going to an uncountble subsequence
we may assume that the following holds for all $\alpha<\omega_1:$
\begin{itemize}
\item $n_{p_\alpha}=n$ for some $n\in \N$,
\item $(F_{p_\alpha})_{\alpha<\omega_1}$ forms a $\Delta$-system $F_{p_\alpha}=\Delta\cup
\{\xi^\alpha_1,  ..., \xi^\alpha_m\}$
with root $\Delta=\{\xi_1, ..., \xi_l\}$, 
\item there are distinct $k_{1}', ..., k_{l}', k_1, ..., k_m\in \{k\in \N: 0\leq k<2^{n}\}$
such that 
\begin{itemize}
\item $r_{\xi_i}\in ({k_{i}'\over2^{n}}, {k_{i}'+1\over2^{n}})$
for $1\leq i\leq l$ and 
\item $r_{\xi_i^\alpha}\in ({k_{i}\over2^{n}}, {k_{i}+1\over2^{n}})$
for $1\leq i\leq m$;
\end{itemize}
\item $f_{p_\alpha}^{\xi_i}=f_{i}'$ for $1\leq i\leq l$ 
and  $f_{p_\alpha}^{\xi_i^\alpha}=f_i$ for $1\leq i\leq m$ are
 rationally piecewise linear functions such that
\begin{itemize}
\item $f_{i}': [0,{k_{i}'\over2^{n}}]\cup[{k_{i}'+1\over2^{n}}, 1]\rightarrow [-1,1]$,
\item $f_i: [0,{k_i\over2^{n}}]\cup[{k_i\over2^{n}}, 1]\rightarrow [-1,1].$
\end{itemize}
\end{itemize}
Fix $\alpha<\beta<\omega_1$ as above. We will construct $o\leq p, q$ with $o\in \PP$ where
$p=p_\alpha$ and $q=p_\beta$. We will have $F_o=F_{p}\cup F_q$. 
Let $n_o>n$ be such a positive integer that there is an injective
 function sending $F_o\ni\xi\rightarrow k_{\xi}'' \in\{k\in \N: 0\leq k<2^{n_o}\}$
such that $r_\xi\in ({k_{\xi}''\over2^{n_o}}, {k_{\xi}''+1\over2^{n_o}})$. 
It exists since all of our $r_\xi$s for $\xi<\omega_1$ are irrationals.

It is clear that we have
\begin{itemize}
\item $r_{\xi_i}\in ({k_{\xi_i}''\over2^{n_o}}, {k_{\xi_i}''+1\over2^{n_o}})\subseteq ({k_{i}'\over2^{n}}, {k_{i}'+1\over2^{n}})$
for $1\leq i\leq l$ and 
\item $r_{\xi_i^\alpha}\in ({k_{\xi_i^\alpha}''\over2^{n_o}}, {k_{\xi_i^\alpha}''+1\over2^{n_o}})
\subseteq ({k_{i}\over2^{n}}, {k_{i}+1\over2^{n}})$
for $1\leq i\leq m$ and 
\item $r_{\xi_i^\beta}\in ({k_{\xi_i^\beta}''\over2^{n_o}}, {k_{\xi_i^\beta}''+1\over2^{n_o}})
\subseteq ({k_{i}\over2^{n}}, {k_{i}+1\over2^{n}})$
for $1\leq i\leq m$.
\end{itemize}
So we are left with 
\begin{itemize}
\item extending each $f_p^{\xi_i^\alpha}$ for $\xi_i^\alpha \in F_p\setminus\Delta$ from
$ [0,{k_{i}\over2^{n}}]\cup[{k_{i}+1\over2^{n}}, 1]$ to a 
 rationally piecewise linear function defined on
$ [0,{k_{\xi_i^\alpha}''\over2^{n_o}}]\cup[{k_{\xi_i^\alpha}''+1\over2^{n_o}}, 1]$
for $1\leq i\leq m$,
\item extending  each $f_q^{\xi_i^\beta}$ for ${\xi^i_\beta} \in F_q\setminus \Delta$ from
$ [0,{k_{i}\over2^{n}}]\cup[{k_{i}+1\over2^{n}}, 1]$ to a 
 rationally piecewise linear function defined on
$ [0,{k_{\xi_i^\beta}''\over2^{n_o}}]\cup[{k_{\xi_i^\beta}''+1\over2^{n_o}}, 1]$
for $1\leq i\leq m$,
\item 
extending  each $f_q^{\xi_i}$ and $f_p^{\xi_i}$ for $\xi_i \in\Delta$  for $1\leq i\leq l$ from
$ [0,{k_{i}'\over2^{n}}]\cup[{k_{i}'+1\over2^{n}}, 1]$ to a single (for each $1\leq i\leq l$)
 rationally piecewise linear function defined on
$ [0,{k_{\xi_i}''\over2^{n_o}}]\cup[{k_{\xi_i}''+1\over2^{n_o}}, 1]$ for $1\leq i\leq l$.
\end{itemize}
Obviously in the first two cases there are some extensions as above.
In the third case we need to use the fact that 
for each $\xi_i\in \Delta$ both of the functions $f_q^{\xi_i}$ and $f_p^{\xi_i}$ are the same 
because they are equal
to $f_i'$ for $1\leq i\leq l$.
This completes the construction
of $o\leq p, q$ and the proof of the c.c.c. for $\PP$.
\end{proof}

\begin{proposition}\label{forces} $\PP$ forces that
there is an anti-Ramsey sequence $(f_\xi)_{\xi<\check{\omega}_1}$.
\end{proposition}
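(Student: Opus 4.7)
The plan is first to verify that the generic defines a sequence of continuous splitting functions, and then to prove the anti-Ramsey clause of Definition \ref{randomdef} by an amalgamation argument at the level of conditions. In the extension $V[G]$ set $f_\xi := \bigcup\{f_p^\xi : p\in G,\ \xi\in F_p\}$ for each $\xi<\omega_1$. Three routine density arguments in $V$ show: (i) for each $\xi$, $\{p : \xi\in F_p\}$ is dense (extend $p$ by increasing $n_p$ so that $r_\xi$ occupies its own dyadic slot, then pick any rationally piecewise linear $f_p^\xi$ on the resulting domain); (ii) for each $\xi$ and $N\in\N$, $\{p : \xi\in F_p,\ n_p\geq N\}$ is dense, so $\mathrm{dom}(f_\xi)=[0,1]\setminus\{r_\xi\}$; (iii) $f_\xi$ is continuous at every $r\neq r_\xi$, since any such $r$ lies in the interior of the domain of $f_p^\xi$ for $p\in G$ with $n_p$ large, where $f_p^\xi$ is continuous. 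Lemma \ref{ccc} preserves $\omega_1$.

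For the anti-Ramsey clause, work in $V$. Fix parameters $m$, intervals $I_1,\ldots,I_m$ with rational endpoints, and rationals $q_1,\ldots,q_m,q_1',\ldots,q_m'\in[-1,1]$ (all coded by ground-model objects, hence decided by the forcing), and a name $(\dot F_\alpha)_{\alpha<\check\omega_1}$ that some $p\in\PP$ forces to satisfy (a)--(c) of Definition \ref{randomdef}. For each $\alpha<\omega_1$ pick $p_\alpha\leq p$ deciding $\dot F_\alpha=F_\alpha=\{\xi_1^\alpha,\ldots,\xi_m^\alpha\}$, with $F_\alpha\subseteq F_{p_\alpha}$, and with $n_{p_\alpha}$ chosen so large that the level-$n_{p_\alpha}$ dyadic interval around each $r_{\xi_i^\alpha}$ is contained in $I_i$. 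The $\Delta$-system lemma combined with countable pigeonhole over the finitely many data attached to each $p_\alpha$ (the integer $n_{p_\alpha}$, the slot assignment $\xi\mapsto k_\xi$, and the rationally piecewise linear functions $f_{p_\alpha}^\xi$) allows me to thin to an uncountable subsequence on which $n_{p_\alpha}=n$ is constant, $(F_{p_\alpha})$ is a $\Delta$-system with finite root $\Delta$ disjoint from each $F_\alpha$ (the finiteness of $\Delta$ together with the pairwise disjointness of the $F_\alpha$ forces this), fixed intervals $(k_i/2^n,(k_i+1)/2^n)\subseteq I_i$ contain each $r_{\xi_i^\alpha}$, and the functions $f_{p_\alpha}^{\xi_i^\alpha}=f_i$ and $f_{p_\alpha}^\xi$ for $\xi\in\Delta$ are independent of $\alpha$.

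The heart of the argument is the amalgamation. Fix any $\alpha<\beta$ from the thinned sequence and construct $o\leq p_\alpha,p_\beta$ in $\PP$ forcing the conclusion of Definition \ref{randomdef} for the pair $(\alpha,\beta)$. Pick $n_o\gg n$ large enough that the level-$n_o$ dyadic intervals $J_\xi$ around $r_\xi$ for $\xi\in F_{p_\alpha}\cup F_{p_\beta}$ are pairwise disjoint, strictly interior to their parent level-$n$ intervals, and mutually non-adjacent (possible since all $r_\xi$ are irrational). Set $F_o=F_{p_\alpha}\cup F_{p_\beta}$, $J_i^\alpha:=J_{\xi_i^\alpha}$, $J_i^\beta:=J_{\xi_i^\beta}$; conditions (1) and (2) of Definition \ref{randomdef} then hold by construction. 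For $\xi\in F_o\setminus(F_\alpha\cup F_\beta)$ extend the existing (shared) function to $[0,1]\setminus J_\xi$ by any rationally piecewise linear filler. For each $1\leq i\leq m$ choose a rationally piecewise linear function $g_i$ on $[0,1]\setminus(J_i^\alpha\cup J_i^\beta)$ that agrees with $f_i$ on $[0,k_i/2^n]\cup[(k_i+1)/2^n,1]$, equals $q_i$ at both endpoints of $J_i^\beta$, and equals $q_i'$ at both endpoints of $J_i^\alpha$; this is a direct linear interpolation on the (at most three) closed subintervals of $(k_i/2^n,(k_i+1)/2^n)\setminus(J_i^\alpha\cup J_i^\beta)$. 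Finally set $f_o^{\xi_i^\alpha}$ equal to $g_i$ on $[0,1]\setminus(J_i^\alpha\cup J_i^\beta)$ and equal to the constant $q_i$ on $J_i^\beta$, and symmetrically $f_o^{\xi_i^\beta}$ equal to $g_i$ outside $J_i^\alpha\cup J_i^\beta$ and to the constant $q_i'$ on $J_i^\alpha$. Continuity on the closed domains is guaranteed by the prescribed boundary values of $g_i$, rational piecewise linearity is preserved, so $o\in\PP$; by construction $o\leq p_\alpha,p_\beta$ and $o$ forces (3)--(5).

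The principal obstacle is the design of $g_i$: it must simultaneously match $f_i$ at the outer dyadic endpoints (so that $o$ actually extends both $p_\alpha$ and $p_\beta$), take the constant values $q_i$ and $q_i'$ at the endpoints of $J_i^\beta$ and $J_i^\alpha$ respectively (for continuity there), and remain rationally piecewise linear. This is possible precisely because $\PP$ was designed to use rationally piecewise linear approximations and the $q_i,q_i'$ were required to be rational, so elementary linear interpolation on each intermediate subinterval succeeds. Since such $o$ exists below any condition forcing the hypotheses, the existential conclusion of Definition \ref{randomdef} is forced for every choice of parameters, and therefore $\PP$ forces $(f_\xi)_{\xi<\check\omega_1}$ to be anti-Ramsey.
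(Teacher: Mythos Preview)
Your argument is correct and mirrors the paper's: density to define the $f_\xi$, a $\Delta$-system plus countable-pigeonhole thinning of the $p_\alpha$'s, and then an explicit amalgamation $o\leq p_\alpha,p_\beta$ whose linear-interpolation construction of $g_i$ is precisely what the paper sketches in its clauses (1)--(3). Two minor technicalities the paper handles and you gloss over: first, $\bigcup\{f_p^\xi:p\in G\}$ is a priori defined only on ground-model reals, so one must pass to the unique continuous extension on $[0,1]^{V[G]}\setminus\{r_\xi\}$ (possible by uniform continuity on compact subintervals); second, your phrase ``the existing (shared) function'' for $\xi\in F_o\setminus(F_\alpha\cup F_\beta)$ is literally correct only for $\xi\in\Delta$, since the extra coordinates $\xi\in F_{p_\alpha}\setminus(\Delta\cup F_\alpha)$ carry a function only in $p_\alpha$, not in $p_\beta$ --- though of course the extension there is even more trivial.
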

\begin{proof}
In the ground set-theoretic universe we fix a sequence $\{r_\xi:\xi<\omega_1\}\subseteq [0,1]$ 
of irrationals and force over this universe with $\PP$ as in Definition \ref{P}.
In the generic extension for each $\alpha<{\check\omega}_1$ we choose
finite pairwise disjoint $F_\alpha$s 
and rationals $q_i, q_i'$ for $1\leq i\leq m\in \N$ and intervals $I_i$s as in Definition \ref{randomdef}.

Let $\dot G$ be the standard $\PP$-name for the generic filter in $\PP$ (see \cite{kunen}).
In the generic extension define   functions
$f_\xi: [0,1]\setminus\{r_\xi\}\rightarrow [-1,1]$ as the unique continuous extension
 of $\bigcup\{f_p^\xi:  p\in G\}$ to the new reals in $[0,1]\setminus\{r_\xi\}$.
This can be done in a standard way since $f_p^\xi$s are  uniformly continuous
in closed intervals away from $r_\xi$ respectively. The domain of $f_\xi$ is indeed $[0,1]\setminus\{r_\xi\}$
as using the technique of Lemma \ref{ccc} one can check that
\[\{p\in \PP: n_p>n\}\leqno (*)\]
is dense in $\PP$ for every $n\in \N$. 
From now on  work in the ground universe. Let $\dot f_\xi$ be a $\PP$-name for $f_\xi$.
It is clear that each $p\in \PP$ forces that $f_p^\xi\subseteq \dot f_\xi$. This will
be sufficient for establishing the proposition.
Fix $p\in \PP$  and names $\dot F_\alpha$ such that $p$ forces that 
$\dot F_\alpha$s are pairwise  disjoint $m$-element subsets of $\omega_1$.
By going to a stronger condition we may assume that $p$ decides the endpoints
of all intervals $I_i$ for $1\leq i\leq m$. Let $d\in \Q$ be the diameter of the smallest of them.
Let $p_\alpha\leq p$ and $F_\alpha=\{\xi_1^\alpha, ..., \xi_m^\alpha\}$ be 
in the ground universe such that
$p_\alpha\forces \dot{F}_\alpha=\check{F}_\alpha$. We may assume that $F_\alpha$s
form a $\Delta$-system, but since we have the c.c.c. and $p$ forces that they are
pairwise disjoint the root of the $\Delta$-system must be empty, that is
$F_\alpha$s are pairwise disjoint. Noting that the sets
\[\{p\in \PP: F\subseteq F_p\}\]
are dense in $\PP$ for every finite $F\subseteq \omega_1$ by extending
$p_\alpha$s we may assume that
$F_\alpha\subseteq F_{p_\alpha}$.  As for every $\alpha<\omega_1$ 
the condition $p_\alpha$ forces that 
$r_{\xi_i^\alpha}\in I_i$ we may assume that there is a fixed $n'\in \N$
such that $(r_{\xi_i^\alpha}-{1\over 2^{n'}}, r_{\xi_i^\alpha}+{1\over 2^{n'}})\subseteq I_i$
for every $1\leq i\leq m$.
 By going to an uncountable subsequence
we may assume that the following holds for all $\alpha<\omega_1:$
\begin{itemize}
\item $n_{p_\alpha}=n$ for some $n\in \N$  with $n\geq n'$,
\item $(F_{p_\alpha})_{\alpha<\omega_1}$ forms a $\Delta$-system $F_{p_\alpha}=\Delta\cup
\{\xi^\alpha_1,  ...,\xi^\alpha_{m}, \xi^\alpha_{m+1}...,  \xi^\alpha_{m'}\}$
with root $\Delta=\{\xi_1, ..., \xi_l\}$, 
\item there are distinct $k_{1}', ..., k_{l}', k_1, ..., k_{m'}\in \{k\in \N: 0\leq k<2^{n}\}$
such that 
\begin{itemize}
\item $r_{\xi_i}\in ({k_{i}'\over2^{n}}, {k_{i}'+1\over2^{n}})$
for $1\leq i\leq l$ and 
\item $r_{\xi_i^\alpha}\in ({k_{i}\over2^{n}}, {k_{i}+1\over2^{n}})$
for $1\leq i\leq m'$ and
\item $ ({k_{i}\over2^{n}}, {k_{i}+1\over2^{n}})\subseteq I_i$
for $1\leq i\leq m$ and
\end{itemize}
\item $f_{p_\alpha}^{\xi_i}=f_{i}'$ for $1\leq i\leq l$ 
and  $f_{p_\alpha}^{\xi_i^\alpha}=f_i$ for $1\leq i\leq m'$ and some
 rationally piecewise linear functions such that
\begin{itemize}
\item $f_{i}': [0,{k_{i}'\over2^{n}}]\cup[{k_{i}'+1\over2^{n}}, 1]\rightarrow [-1,1]$
\item $f_i: [0,{k_i\over2^{n}}]\cup[{k_i+1\over2^{n}}, 1]\rightarrow [-1,1]$
\end{itemize}
\end{itemize}
Fix $\alpha<\beta<\omega_1$ as above. As in the proof of the c.c.c. of $\PP$
we will construct $o\leq p, q$ where
$p=p_\alpha$ and $q=p_\beta$ such that $o$ will force the
properties $f_{\xi_\alpha^i}$ and $f_{\xi_\beta^i}$
 as in Definition \ref{randomdef}. We will have $F_o=F_{p}\cup F_q$. 
Let $n_o>n$ be such a positive integer that there is an injective
 function sending $F_o\ni\xi\rightarrow k_{\xi}'' \in\{k\in \N: 0\leq k<2^{n_r}\}$
such that $r_\xi\in ({k_{\xi}''\over2^{n_o}}, {k_{\xi}''+1\over2^{n_o}})$. 
By  choosing sufficiently big $n_o$
we may assume that 
\[ |r_{\xi_i^\alpha}-{k_{i}\over2^{n}}|, |r_{\xi_i^\alpha}-{k_{i}+1\over2^{n}}|> 
{1\over2^{n_o}}\leqno \bigodot\]
holds for every $1\leq i\leq m$.
It is clear that we have
\begin{itemize}
\item $r_{\xi_i}\in ({k_{\xi_i}''\over2^{n_o}}, {k_{\xi_i}''+1\over2^{n_o}})\subseteq ({k_{i}'\over2^{n}}, {k_{i}'+1\over2^{n}})$
for $1\leq i\leq l$ and 
\item $r_{\xi_i^\alpha}\in ({k_{\xi_i^\alpha}''\over2^{n_o}}, {k_{\xi_i^\alpha}''+1\over2^{n_o}})
\subseteq ({k_{i}\over2^{n}}, {k_{i}+1\over2^{n}})$
for $1\leq i\leq m'$ and 
\item $r_{\xi_i^\beta}\in ({k_{\xi_i^\beta}''\over2^{n_o}}, {k_{\xi_i^\beta}''+1\over2^{n_o}})
\subseteq ({k_{i}\over2^{n}}, {k_{i}+1\over2^{n}})$
for $1\leq i\leq m'$.
\end{itemize}
We define
\begin{itemize}
\item  $J_i^\alpha= ({k_{\xi_i^\alpha}''\over2^{n_o}}, {k_{\xi_i^\alpha}''+1\over2^{n_o}})$ 
for $1\leq i\leq m$,
\item $J_i^\beta=({k_{\xi_i^\beta}''\over2^{n_o}}, {k_{\xi_i^\beta}''+1\over2^{n_o}})$
for  $1\leq i\leq m$.
\end{itemize}
So we have $J_i^\alpha, J_i^\beta\subseteq I_i$ for each $1\leq i\leq m$
but by $\bigodot$ the endpoints of $J_i$s do not coincide with
the endpoints of the intervals $[{k\over 2^n}, {k+1\over 2^n}]$ for
$0\leq k<2^n$. We are left with 
\begin{itemize}
\item extending each $f_p^{\xi_i^\alpha}$ for $\xi_i^\alpha \in F_p\setminus\Delta$ from
$ [0,{k_{i}\over2^{n}}]\cup[{k_{i}+1\over2^{n}}, 1]$ to a 
 rationally piecewise linear function $f_o^{\xi_i^\alpha}$ defined on
\[[0,{k_{\xi_i^\alpha}''\over2^{n_o}}]\cup[{k_{\xi_i^\alpha}''+1\over2^{n_o}}, 1]=[0,1]\setminus J_i^\alpha,\]
and extending  each $f_q^{\xi_i^\beta}$ for $\xi_i^\beta \in F_q\setminus \Delta$ from
$ [0,{k_{i}\over2^{n}}]\cup[{k_{i}+1\over2^{n}}, 1]$ to a 
 rationally piecewise linear function $f_o^{\xi_i^\beta}$ defined on
\[[0,{k_{\xi_i^\beta}''\over2^{n_o}}]\cup[{k_{\xi_i^\beta}''+1\over2^{n_o}}, 1]=[0,1]\setminus J_i^\beta,\] so that
\begin{enumerate}
\item $f_o^{{\xi_i^{\alpha}}}\restriction ([0,1]\setminus (J_i^\alpha\cup  J_i^\beta))
=f_o^{{\xi_i^{\beta}}}\restriction ([0,1]\setminus (J_i^\alpha\cup  J_i^\beta))$,
\item $f_o^{{\xi_i^{\alpha}}}\restriction J_{i}^\beta=q_i$,
\item $f_o^{{\xi_i^{\beta}}}\restriction J_{i}^\alpha=q_i'$.
\end{enumerate}
\item 
extending  each $f_q^\xi$ and $f_p^\xi$ for $\xi \in\Delta$ from
$ [0,{k_{i}'\over2^{n}}]\cup[{k_{i}'+1\over2^{n}}, 1]$ to a single for each $\xi\in \Delta$
 rationally piecewise linear function defined on
$ [0,{k_{\xi}''\over2^{n_o}}]\cup[{k_{\xi}''+1\over2^{n_o}}, 1]$,
\end{itemize}
As we noted in the proof of the c.c.c. of $\PP$ any extensions will do to obtain 
$o\leq p,q$, however
to obtain the properties of $f_{\xi_\alpha^i}$ and $f_{\xi^\beta_i}$ from Definition \ref{randomdef}
one needs to follow the requirements (3) - (5) of it that is obtain (1)-(3) above.
This can be achieved as $f_{p}^{\xi_i^\alpha}=f_{q}^{\xi_i^\beta}=f_i$
which is defined on $[0,1]\setminus ({k_i\over{2^n}}, {k_i+1\over{2^n}})$
and $J_i^\alpha, J_i^\beta\subseteq [{k_i\over{2^n}}, {k_i+1\over{ 2^n}}]$ are
 disjoint for each $1\leq i\leq m$ and have the endpoints distinct from 
${k_i\over2^n}, {k_i+1\over 2^n}$ by $\bigodot$.

In the third case we need to use the fact that 
for each $\xi_i\in \Delta$ both of the functions $f_q^{\xi_i}$ and $f_p^{\xi_i}$ are the same 
because they are equal
to $f_i'$ for $1\leq i\leq l$.

This completes the construction
of $o\leq p, q$ which forces the properties $f_{\xi^\alpha_i}$ and $f_{\xi^\beta_i}$ from Definition \ref{randomdef} .
\end{proof}

\begin{theorem}\label{consistency} It is relatively consistent with ZFC that there is
a sequence $(f_\xi)_{\xi<\omega_1}$ which is anti-Ramsey.
\end{theorem}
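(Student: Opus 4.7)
The plan is that Theorem \ref{consistency} follows essentially immediately by combining the work already done in this section. Starting from any model $V$ of ZFC, I would pass to the generic extension $V[G]$ obtained by forcing with the partial order $\PP$ of Definition \ref{P}, and verify that in $V[G]$ the relevant sequence exists and still has length $\omega_1$ as computed in $V[G]$.

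First I would invoke Lemma \ref{ccc}, which states that $\PP$ satisfies the countable chain condition. A c.c.c. forcing preserves all cardinals and cofinalities (this is standard, see e.g.\ \cite{kunen}), so in particular $\omega_1^V = \omega_1^{V[G]}$. This is essential: the anti-Ramsey property in Definition \ref{randomdef} refers to sequences indexed by $\omega_1$, and we need to know that the ordinal $\omega_1$ computed in the extension is the same one over which our generic sequence is indexed.

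Next I would apply Proposition \ref{forces}, which asserts that $\PP$ forces the existence of an anti-Ramsey sequence $(f_\xi)_{\xi<\check\omega_1}$. Combined with the preservation of $\omega_1$ from the previous step, this means that in $V[G]$ there is genuinely an anti-Ramsey sequence indexed by the true $\omega_1$. Thus $V[G]$ is a model of ZFC together with the statement ``there exists an anti-Ramsey sequence $(f_\xi)_{\xi<\omega_1}$'', which is precisely the assertion of the theorem.

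No obstacle is expected here: all the genuine combinatorial content has been absorbed into Lemma \ref{ccc} and Proposition \ref{forces}, and the theorem is a two-line consequence recording that these together yield a consistency statement. The only thing to be careful about is the pro forma remark that $\PP$ is a set (not a proper class) in $V$, so that the forcing extension exists and is a model of ZFC; this is immediate from Definition \ref{P} since conditions are finite objects coded by countable data.
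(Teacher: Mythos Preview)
Your proposal is correct and mirrors the paper's own proof almost verbatim: the paper simply notes that $\PP$ is c.c.c.\ by Lemma~\ref{ccc}, so $\omega_1$ is preserved, and then Proposition~\ref{forces} gives the anti-Ramsey sequence in the extension. Your additional remarks about cardinal preservation and $\PP$ being a set are fine elaborations but add nothing the paper does not already implicitly assume.
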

\begin{proof} As $\PP$ is c.c.c. by Lemma \ref{ccc} the $\omega_1$ of the 
generic extension is the same as $\omega_1$ of the ground universe, and so 
Proposition \ref{forces} implies that there is an anti-Ramsey sequence 
in the generic extension.
\end{proof}

\section{Equilateral sets in Banach spaces of the form $C(K)$ under Martin's axiom}

\begin{theorem}
Assume Martin's axiom and the negation of the continuum hypothesis. Suppose that
$K$ is a compact Hausdorff space.
If the Banach space $C(K)$ of real valued continuous functions on $K$ wit the supremum norm
 is nonseparable, then the unit ball of $C(K)$ contains
\begin{itemize}
\item an uncountable $2$-equilateral set,
\item an uncountable $(1+\varepsilon)$-separated set.
\end{itemize}

\end{theorem}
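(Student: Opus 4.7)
The plan is to apply the characterization of Mercourakis and Vassiliadis: by Theorem~1 of \cite{equilateralcofk}, for a compact Hausdorff $K$ the unit sphere of $C(K)$ admits an uncountable $2$-equilateral set, and the unit ball admits an uncountable $(1+\varepsilon)$-separated set for some $\varepsilon>0$, if and only if $K$ carries an uncountable \emph{intersecting family of pairs of disjoint closed sets}, namely a family $((A_\alpha, B_\alpha))_{\alpha<\omega_1}$ with $A_\alpha\cap B_\alpha=\emptyset$ for each $\alpha$ and $(A_\alpha\cap B_\beta)\cup(A_\beta\cap B_\alpha)\neq\emptyset$ for all $\alpha\neq\beta$. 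Urysohn's lemma converts each closed pair into an $f_\alpha$ in the unit ball with $f_\alpha|_{A_\alpha}=1$, $f_\alpha|_{B_\alpha}=-1$, and then the intersecting condition forces $\|f_\alpha-f_\beta\|_\infty=2$. So it suffices to produce such a family in any nonseparable $C(K)$ assuming Martin's axiom and the negation of the continuum hypothesis.

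Arguing by contradiction, suppose no such uncountable intersecting family exists. I would consider the natural forcing notion $\mathcal{P}$ whose conditions are pairs $p=(A_p,B_p)$ of disjoint closed subsets of $K$, ordered by componentwise reverse inclusion, declaring $p$ and $q$ compatible when $A_p\cap B_q=\emptyset=A_q\cap B_p$ (so that $(A_p\cup A_q, B_p\cup B_q)$ is a common extension in $\mathcal{P}$). Antichains in $\mathcal{P}$ are then exactly intersecting families of closed pairs, so the assumption yields that $\mathcal{P}$ --- a forcing of the type considered extensively in \cite{formin} --- satisfies the c.c.c.

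The final step is to invoke MA$+\neg$CH against $\mathcal{P}$: starting from an uncountable $1$-separated subset of the sphere of $C(K)$ produced by Kania and Kochanek \cite{kania}, extract $\aleph_1$ many candidate pairs $(A_\alpha, B_\alpha)$ from high and low level sets of those functions, and, for each $\alpha<\omega_1$, let $D_\alpha\subseteq\mathcal{P}$ be the (dense) set of conditions that decide the $\alpha$-th candidate, i.e.\ either extend it or are incompatible with it. Meeting all $D_\alpha$ in one filter yields uncountably many candidates that must be in the ``incompatible'' alternative; a $\Delta$-system refinement of those candidates then produces an uncountable intersecting subfamily, contradicting the hypothesis.

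The main obstacle is arranging that MA really drives uncountably many of the $\alpha$'s into the ``incompatible'' alternative: the generic filter could in principle absorb the whole candidate family into a single giant pair, and ruling this out requires a genuine combinatorial exploitation of nonseparability beyond mere $1$-separation. This is precisely where the gap between Kania-Kochanek's $1$-separation and the $(1+\varepsilon)$-separation demanded by Mercourakis-Vassiliadis must be closed under MA, in the diagonal spirit of Todorcevic's MA construction of uncountable biorthogonal systems in nonseparable $C(K)$ \cite{stevobiorth}.
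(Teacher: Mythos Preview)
Your framework is exactly right: antichains in the poset of disjoint closed pairs are precisely the intersecting families of Mercourakis--Vassiliadis, so the whole problem reduces to showing that this poset is not c.c.c. But your ``final step'' is not a proof, and you yourself flag the gap: there is no mechanism that forces the generic filter to reject uncountably many of your candidate pairs, and the appeal to a $\Delta$-system refinement of the rejected ones is circular (you would need them to be pairwise incompatible, which is what you are trying to prove). There is also a size issue you do not address: MA applies only to posets of cardinality $<\mathfrak{c}$, while the set of \emph{all} pairs of disjoint closed subsets of $K$ may be far larger.

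The paper closes the gap by a completely different route and avoids the Kania--Kochanek input altogether. First it reduces to the case where $K$ has weight $\omega_1$ and is hereditarily Lindel\"of (the latter via Theorem~2(2) of \cite{equilateralcofk}). It then fixes a basis $\mathcal B$ of size $\omega_1$ and takes the poset of pairs $(U,V)\in\mathcal B^2$ with $\overline U\cap\overline V=\emptyset$, ordered by reverse inclusion; this poset has cardinality $\omega_1<\mathfrak{c}$. Now comes the key idea you are missing: under MA$+\neg$CH, any c.c.c.\ partial order of size $<\mathfrak{c}$ is $\sigma$-centered. Writing $\PP=\bigcup_n\PP_n$ with each $\PP_n$ centered, one sets $U_n=\bigcup\{U_p:p\in\PP_n\}$ and $V_n=\bigcup\{V_p:p\in\PP_n\}$; centeredness gives $U_n\cap V_n=\emptyset$. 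Hereditary Lindel\"ofness is then used to build, for each $n$, a continuous $f_n:K\to[-1,1]$ that is strictly positive on $U_n$ and strictly negative on $V_n$. Since every pair of distinct points of $K$ is separated by some basic pair $(U_p,V_p)$, the countable family $(f_n)_n$ separates points of $K$, contradicting nonmetrizability. Thus the poset is not c.c.c., yielding the required uncountable intersecting family.

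In short: replace your dense-sets argument by the $\sigma$-centeredness consequence of MA, work with a basis of size $\omega_1$ to make the poset small, and derive metrizability (not a combinatorial contradiction about candidate pairs) as the absurdity.
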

\begin{proof}
First let us note that we may assume that the weight of $K$, that is the
density of $C(K)$ is the first uncountable cardinal $\omega_1$. Indeed, 
by embedding $K$ into $[0,1]^\kappa$ for some uncountable $\kappa$ one may find 
an uncountable $A\subseteq \kappa$ such that its cardinality is $\omega_1$ and 
the projection from $[0,1]^\kappa$ onto $[0,1]^A$ sends $K$ to a set of uncountable
weight. So we obtain a continuous image $L$ of $K$ which is of weight $\omega_1$,
it is clear that $C(L)$ has its isometric copy inside $C(K)$ and so
an equilateral set in $C(L)$ yields an equilateral set in $C(K)$.
We may also assume that $K$ is hereditarily Lindel\"of by Theorems 1 and  2 (2)
of \cite{equilateralcofk}.  We will need the following fact about hereditarily Lindel\"of
compact Hausdorff spaces:
\vskip 6pt
\noindent{\bf Claim:} If $K$ is compact Hausdorff hereditarily Lindel\"of, $U, V$ are disjoint
open subsets of $K$, then there is a continuous function
$f: K\rightarrow[-1,1]$ such that $f(x)>0$ for all $x\in U$ and $f(x)<0$ 
for all $x\in V$.\par

\noindent{\it Proof of the Claim:}
By the regularity of $K$ both $U$ and $V$ can be covered by
their open subsets whose closures are included in $U$ and $V$ respectively.
As $K$ is hereditarily Lindel\"of, we can find countable subcovers of these covers
which yield sequences of open sets $(U_n)_{n\in \N}$ and $(V_n)_{n\in \N}$
such that $\bigcup_{n\in \N}U_n=U$, $\bigcup_{n\in \N}V_n=V$ and
$\overline{U_n}\subseteq U$ as well as $\overline{V_n}\subseteq V$ hold for each $n\in \N$.
It follows that $\overline{U_n}\cap\overline{V}=\emptyset$ as well as
$\overline{V_n}\cap\overline{U}=\emptyset$ for each $n\in \N$. Using the
normality of $K$ define
$f_n: K\rightarrow [0, {1\over{2^n}}]$ such that $f_n\restriction \overline{U_n}={1\over{2^n}}$ and
$f_n\restriction\overline{V}=0$ and likewise 
$g_n: K\rightarrow [{-1\over{2^n}}, 0]$ such that $g_n\restriction \overline{V_n}={-1\over{2^n}}$ and
$g_n\restriction\overline{U}=0$. Now  $f=\Sigma_{n\in \N}(f_n+g_n)$ works
as $\bigcup_{n\in N}\overline{U_n}=U$ and $\bigcup_{n\in N}\overline{V_n}=V$ which completes the proof of the claim.
\vskip 6pt
Let $\mathcal B$ be
an open basis for $K$ of size $\omega_1$.
Consider the partial ordering $\leq $ on the set
\[\PP=\{p=(U_p, V_p): \overline{U_p}\cap \overline{V_p}=\emptyset;\
 U_p, V_p\in\mathcal B\}\]
where $p\leq q$ if and only if $U_p\supseteq U_q$ and $V_p\supseteq V_q$.
Let us assume that $\PP$ is c.c.c. and we will derive a 
contradiction with the nonmetrizability of $K$. By
 the negation of the continuum hypothesis $|\PP|<2^\omega$ and so 
by Martin's axiom 
and
Theorem 4.5 from \cite{weiss}
$\PP=\bigcup_{n\in \N}\PP_n$ where each $\PP_n$ consists of compatible elements.
That is, for each $n\in \N$ and each $p, q\in \PP_n$ there is $r\in \PP$ such that $r\leq p,q$
which in particular means that $\overline{(U_p\cup U_q)}\cap \overline{(V_p\cup V_q)}=\emptyset$.
Consider $\bigcup\{U_p: p\in \PP_n\}=U_n$ and 
$\bigcup\{V_p: p\in \PP_n\}=V_n$
for each $n\in \N$ and note that the compatibility of the elements of $\PP_n$
gives that $U_n\cap V_n=\emptyset $ for each $n\in \N$.
Hence, following the claim, we can construct continuous $f_n: K\rightarrow [-1, 1]$ such that
$f\restriction U_n$ is strictly positive and $f\restriction V_n$ is strictly negative.
Now we note that given two distinct points $x, y\in K$ there is
$p\in \PP$ such that $x\in U_p$ and $y\in V_p$ as the open sets
are taken from a basis  and hence there is an $n\in \N$ such that $f_n$
 separates $x$ and $y$. The existence of a countable family
of continuous functions separating the points of $K$  contradicts the nonmetrizability of
$K$.

So we conclude that $\PP$ has an uncountable antichain $(p_\xi: \xi<\omega_1)$.
It means that $\overline{U_{p_\xi}}\cap \overline{V_{p_\eta}}\not=\emptyset$
or $\overline{V_{p_\xi}}\cap \overline{U_{p_\eta}}\not=\emptyset$ for
any distinct $\xi,\eta<\omega_1$ and in particular $({U_{p_\xi}},{V_{p_\eta}})\not=({U_{p_\eta}},{V_{p_\xi}})$,
hence we obtain an uncountable  intersecting family of closed pairs which is equivalent
by Theorem 1 of \cite{equilateralcofk} to the existence
of $2$-equilateral uncountable family in the sphere of $C(K)$ and an $(1+\varepsilon)$-separated
set for some $\varepsilon>0$.
\end{proof}

\bibliographystyle{amsplain}

\end{document}